\newtheoremstyle{mystyle} 
    {\topsep}				
    {\topsep}				
    {\normalfont}			
    {} 					
    {\bfseries} 			
    {\newline}				
    {5pt plus 1pt minus 1pt}
    {\underline{\thmname{#1}\thmnumber{#2}\thmnote{（#3）}}}	
\theoremstyle{definition}
\newtheorem{theorem}{Theorem}
\newtheorem{prop}[theorem]{Proposition}
\newtheorem{lem}[theorem]{Lemma}
\newtheorem{cor}[theorem]{Corollary}
\newtheorem{rem}[theorem]{Remark}
\numberwithin{theorem}{section} 	 
\numberwithin{equation}{section}		 
\newcommand \defeq{\overset{\text{def}}=}
\newcommand \ab{\operatorname{ab}}
\newcommand \Gal{\operatorname{Gal}}
\newcommand \isom {\overset \sim \rightarrow}
\def \Image{\operatorname{Im}}
\def \Aut{\operatorname{Aut}}
\def \tor{\operatorname{tor}}
\def \inf{\operatorname{inf}}
\def\p{\frak{p}}
\def\q{\frak{q}}
\def\Hom{\operatorname{Hom}}
\def\rank{\operatorname{rank}}
\newcommand\bZ{\mathbb Z}
\newcommand\bQ{\mathbb Q}
\newcommand\bC{\mathbb C}
\def \cs{\operatorname{cs}}
\def\tr{\operatorname{tr}}
\def\sup{\operatorname{sup}}
\def\inf{\operatorname{inf}}
\def\cO{{\mathcal O}}
\def \Iso{\operatorname{Iso}}
\def \OutIso{\operatorname{OutIso}}
\def \Inn{\operatorname{Inn}}
\def \Out{\operatorname{Out}}
\begin{document}

\renewcommand{\thesection}{\arabic{section}}

\renewcommand\thefootnote{*\arabic{footnote}}

\title{Isomorphisms of Galois groups of number fields with restricted ramification}
\author{ \textsc{Ryoji Shimizu}\footnote{RIMS, Kyoto University, Kyoto
606-8502, Japan.
e-mail: \texttt{shimizur@kurims.kyoto-u.ac.jp}}}
\date{
}
\maketitle


\begin{abstract}
Let $K$ be a number field and $S$ a set of primes of $K$.
We write $K_S/K$ for the maximal extension of $K$ 
unramified outside $S$ and $G_{K,S}$ for its Galois group. 
In this paper, we 
answer the following question under some assumptions: 
``For $i=1,2$, 
let $K_i$ be a number field, $S_i$ a (sufficiently large) set of primes of $K_i$ and $\sigma :G_{K_1,S_1}\isom G_{K_2,S_2}$ an isomorphism.
Is $\sigma$ induced by a unique isomorphism between $K_{1,S_1}/K_1$ and $K_{2,S_2}/K_2$?''
Here the main assumption is 
about the Dirichlet density of $S_i$.
\end{abstract}

\tableofcontents

\section{Introduction}
Let $K$ be a number field and $S$ a set of primes of $K$. We write $K_S/K$ for the maximal extension of $K$ unramified outside $S$ and $G_{K,S}$ for its Galois group. 

The Neukirch-Uchida theorem, 
which is one of the most important results in anabelian geometry, states that 
if the absolute Galois groups of number fields are isomorphic, then the number fields are isomorphic (cf. \cite{Neukirch} and {\cite{Uchida}}).
Moreover, 
Uchida also proved that the isomorphisms of the absolute Galois groups of number fields arise functorially from unique isomorphisms of fields (cf. {\cite[THEOREM]{Uchida}}).
On the other hand, 
Ivanov in 
\cite{Ivanov2}, \cite{Ivanov} 
and 
\cite{Ivanov3}, and succeedingly the author in \cite{Shimizu}, 
studied 
a generalization of the Neukirch-Uchida theorem where one replaces the absolute Galois groups by the Galois groups of the maximal extensions with restricted ramification. 
These results prompt the following natural question (cf. {\cite[(12.3.4) Question]{NSW}}): 

{\it 
For $i=1,2$, 
let $K_i$ be a number field, $S_i$ a (sufficiently large) set of primes of $K_i$ and $\sigma :G_{K_1,S_1}\isom G_{K_2,S_2}$ an isomorphism.
Is 
$\sigma$ 
induced by a unique 
isomorphism between 
$K_{1,S_1}/K_1$ and $K_{2,S_2}/K_2$?
}

In this paper, to approach this question, 
we 
mainly 
refine arguments in \cite{Shimizu}.

In \S 1, we prove the faithfulness of the Galois action on the Galois group of the maximal multiple $\bZ_l$-extension.
By this result, we obtain the ``uniqueness" in question under a mild assumption.

In \S 2, we 
develop a way, 
based on \cite{Shimizu}, \S 3, 
to show that 
isomorphisms of Galois groups are induced by 
(unique) field isomorphisms 
under some assumptions, for example about the Dirichlet density.
Then, by using this, we prove the main result (more precisely, see Theorem \ref{2.5}): 
{\it 
For $i=1,2$, 
let $K_i$ be a number field, $S_i$ a set of primes of $K_i$ and $\sigma :G_{K_1,S_1}\isom G_{K_2,S_2}$ an isomorphism.
Assume that for $i=1,2$ and for any finite Galois subextension $L_i$ of $K_{i,{S_i}}/K_i$, 
the Dirichlet density of $P_{S_i,f} \cap \cs(L_i/\bQ)$ is not zero
(cf. Notations).
Then there exists a unique isomorphism $\tau:K_{2,{S_2}} \isom K_{1,{S_1}}$ such that $K_1=\tau(K_2)$ and $\sigma$ coincides with the isomorphism induced by $\tau$.
}

In \S 3, we see some applications of the main theorem, for example about the set of outer isomorphisms of $G_{K,S}$ (Corollary \ref{3.5}).

In \S 4, we study intersections of decomposition groups in $G_{K,S}$. 
The results not only are interesting in themselves, 
but also give an alternative 
proof of the ``uniqueness" in question. 

\section*{Acknowledgements}
The author 
would like to thank
Professor Akio Tamagawa 
for helpful 
advices 
and 
carefully reading preliminary versions of 
the present paper. 

The author 
would also like to thank
Professor Florian Pop 
for comments on the proofs of Lemma \ref{1.2} and Lemma \ref{1.3} in a former version of this paper.

This work was supported by JSPS KAKENHI Grant Number 21J11879.

\section*{Notations}
\begin{itemize}[leftmargin=*]
\item[$\bullet$]
Given a set $A$ we write $\# A$ for its cardinality.

\item[$\bullet$]
For a profinite group $G$, let $\overline{[G, G]}$ be the closed subgroup of $G$ which is (topologically) generated by the commutators in $G$. We write $G^{\ab} \defeq G/\overline{[G,G]}$
for the maximal abelian quotient of $G$.

\item[$\bullet$]
For a profinite group $G$, we say that $G$ is topologically infinitely generated if $G$ is not topologically finitely generated.

\item[$\bullet$]
Given a profinite group $G$ 
and a prime number $l$, we write $G^{(l)}$ for the maximal
pro-$l$ quotient of $G$.

\item
Given a Galois extension $L/K$, we write $G(L/K)$ for its Galois group $\Gal(L/K)$.
Given a field $K$, we write $\overline{K}$ for a separable closure of $K$, 
and $G_K$ for the absolute Galois group $G(\overline{K}/K)$ of $K$.



\item
A number field is a finite extension 
 of the field of rational numbers $\bQ$. 
For an (a possibly infinite) algebraic extension $F$ of $\bQ$, 
we write 
$P=P_F$ 
for the set of primes 
of $F$, 
$P_\infty=P_{F,\infty}$ for the set of archimedean primes of $F$, 
and, 
for a prime number $l$, 
$P_l=P_{F,l}$ for the set of nonarchimedean primes of $F$ above $l$. 
Further, for a set of primes $S \subset P_F$, 
we set $S_f\defeq S \setminus P_\infty$, 
$P_S \defeq \{ p \in P_{\bQ} \mid P_{F,p} \subset S \}$.
For $\bQ \subset F \subset F' \subset \overline{\bQ}$, 
we write 
$S(F')$ for the set of primes of $F'$ above the primes in $S$: 
$S(F') \defeq \{ \p \in P_{F'} \mid \p|_F \in S \}$.
For convenience, we consider 
that $F'/F$ is ramified at a complex prime of $F'$ if it is above a real prime of $F$.
We write $F_S/F$ for the maximal extension of $F$ unramified outside $S$ and $G_{F,S}$ for its Galois group.
When $P_\infty \subset S$, we set 
$\mathcal O_{F,S} \defeq \{ a \in F \mid 
|a|_{\p} \leq 1
\text{ for all } \p \notin S \}$, where 
$|\ |_{\p}$ is an absolute valuation associated to $\p$.

\item
For $i=1,2$, 
let $A_i$ be a (commutative) ring.
Write $\Iso(A_2, A_1)$ for the set of ring isomorphisms from $A_2$ to $A_1$.
For $i=1,2$, 
let $B_i$ be a ring containing $A_i$.
Write $\Iso(B_2/A_2, B_1/A_1) \defeq \{ \tau\in \Iso(B_2, B_1) \mid \tau(A_2)=A_1 \}$.
For $i=1,2$, 
let $K_i$ be a number field, $L_i/K_i$ an algebraic extension,  and $S_i$ a set of primes of $K_i$.
Write 
\begin{equation*}
\begin{split}
&\Iso((K_2,S_2), (K_1,S_1)) \defeq \{ \tau\in \Iso(K_2, K_1) \mid \text{$\tau$ induces a bijection between $S_2$ and $S_1$}\},\\
&\Iso((L_2/K_2,S_2), (L_1/K_1,S_1)) \defeq 
\left\{ \tau\in \Iso(L_2/K_2, L_1/K_1) \left|
\begin{array}{l}
\text{
$\tau$ induces a bijection between}\\ 
\text{$S_2(L_2)$ and $S_1(L_1)$}
\end{array}
\right.\right\}.
\end{split}
\end{equation*}
For a number field $K$ and a set of primes $S$ of $K$, write 
$\Aut(K,S) \defeq \Iso((K,S), (K,S))$.

\item
For a profinite group $G$, write $\Inn(G)$ for the set of inner automorphisms of $G$.
For $i=1,2$, 
let $G_i$ be a profinite group.
Write $\Iso(G_1, G_2)$ for the set of isomorphisms of profinite groups from $G_1$ to $G_2$.
Note that the group $\Inn(G_2)$ acts on $\Iso(G_1, G_2)$ by the rule $\sigma(\phi) \defeq \sigma\circ\phi$ for $\sigma\in\Inn(G)$ and $\phi\in\Iso(G_1, G_2)$.
We call 
$\OutIso(G_1, G_2) \defeq \Iso(G_1, G_2)/\Inn(G_2)$
the set of outer isomorphisms from $G_1$ to $G_2$.
Write $\Out(G) \defeq \OutIso(G, G)$.

\item
Given an algebraic extension $K$ of $\bQ$ and $\p \in P_{K,f}$, 
we write $\kappa(\p)$ for the residue field at $\p$. 
When $K$ is a number field, we write $K_\p$ for the completion of $K$ at $\p$, 
and, in general, we write $K_\p$ for the union of $K'_{\p|_{K'}}$ for finite subextensions $K'/\bQ$ of $K/\bQ$.

\item
Let $L/K$ be a finite extension of number fields and $\q \in P_{L,f}$, 
and set $\p = \q|_{K}$.
We write $f_{\q,L/K} \defeq [\kappa(\q):\kappa(\p)]$.
We write $\cs(L/K)$ 
for the set of nonarchimedean primes of $K$
which split completely 
in $L/K$.

\item
Let $K$ be a number field and $\p \in P_{K,f}$, 
and set $p = \p|_{\bQ}$.
Define the residual degree (resp. the local degree) of $\p$ 
as 
 $f_{\p,K/\bQ}$ 
(resp. $[K_\p:\bQ_p]$).
We set $\frak{N}(\p) \defeq \# \kappa(\p) = p^{f_{\p,K/\bQ}}$.

\item
For a number field $K$ and a set of primes $S \subset P_K$, 
we set 
$$
\delta_{\sup}(S) \defeq  
\limsup_{s \to 1+0} \frac{\sum_{\p \in S_f} \frak{N}(\p)^{-s}}{\log{\frac{1}{s-1}}}
,\ 
\delta_{\inf}(S) \defeq  
\liminf_{s \to 1+0} \frac{\sum_{\p \in S_f} \frak{N}(\p)^{-s}}{\log{\frac{1}{s-1}}}
$$
and 
if $\delta_{\sup}(S) = \delta_{\inf}(S)$, 
then write $\delta(S)$ (the Dirichlet density of $S$) for them.
The term 
``$\delta(S) \neq 0$''
will always mean that 
$S$ has positive Dirichlet density 
or 
$S$ does not have Dirichlet density.
Note that 
$\delta(S) \neq 0$ 
if and only if 
$\delta_{\sup}(S) > 0$.

\item
For $\bQ \subset F \subset F' \subset 
\overline{\bQ}$ with $F'/F$ Galois, $\q \in P_{F',f}$ and $\p = \q|_F$, 
write $
D_{\q}(F'/F) \subset G(F'/F)$ for the decomposition group (i.e. the stabilizer) of $\q$ in $G(F'/F)$. 
We sometimes write $D_{\q} = D_{\q}(F'/F)$, when no confusion arises.
There exists a canonical isomorphism $D_{\q}(F'/F) \simeq G(F'_\q/F_\p)$,
and 
we will identify $D_{\q}(F'/F)$ with $G(F'_\q/F_\p)$ via this isomorphism.

\item
Let $p$ be a prime number.
A 
$p$-adic field is a finite extension 
 of the field of $p$-adic numbers $\bQ_p$. 
Let $\kappa$ be 
a $p$-adic field.
We write $V_{\kappa}$ 
for the ramification 
subgroup of $G_\kappa$, 
and set 
$G_\kappa^{\tr} \defeq G_\kappa/V_{\kappa}$.
Let $\lambda/\kappa$ be a Galois extension.
We say that $G(\lambda/\kappa)$ is full if $\lambda$ is algebraically closed. 

\item
Given an abelian group $A$, 
we write $A_{\tor}$ for the torsion subgroup of $A$.

\item
Given an abelian profinite group $A$,
we write $\overline{A_{\tor}}$ for the closure in $A$ of $A_{\tor}$,
 and set $A^{/\tor} \defeq A/\overline{A_{\tor}}$.

\item
Given a field $K$, we write $\mu (K)$ for the group consisting of the roots of unity in $K$.
For $n \in \bZ_{>0}$ 
not divisible by the characteristic of $K$, 
we write $\mu_n=\mu_n (\overline{K}) \subset \mu (\overline{K})$ for the subgroup of order $n$. 
For a prime number $l$ distinct from the characteristic of $K$, 
we set 
$\mu_{l^\infty} \defeq \bigcup_{n \in \bZ_{>0}} \mu_{l^n}(\overline{K}) \subset \mu (\overline{K})$.


\end{itemize}


\section{The faithfulness of the Galois action 
on the Galois group of the maximal multiple $\bZ_l$-extension}

In the rest of this 
paper, 
let $K$ be a number field and
$S \subset P_K$ a set of primes of $K$.

Let $l$ be a prime number.
We set 
$
\Gamma_K = \Gamma_{K, l} \defeq G_{K}^{\ab,(l),/\tor}$.
Then $\Gamma_K$ is a free $\bZ_l$-module.
Set $r_l(K) \defeq \rank_{\bZ_l}\Gamma_K$, 
and write $K^{(\infty)}=K^{(\infty,l)}$ for the extension of $K$ corresponding to $\Gamma_K$.
$K^{(\infty,l)}/K$ is unramified outside $P_l$ by class field theory, and hence, if $P_l \subset S$, $\Gamma_{K, l} = G_{K,S}^{\ab,(l),/\tor}$.
By 
\cite{NSW}, (10.3.20) Proposition, 
we have $r_\bC(K)+1 \leq r_l(K) \leq [K:\bQ]$, 
where $r_\bC(K)$ is the number of complex primes of $K$.
For a finite extension $L/K$, 
define $\pi_{L/K} = \pi_{L/K, l}$ to be the canonical homomorphism: $\Gamma_{L,l} \to \Gamma_{K,l}$.
We write $\Hom_{cts}(\Gamma_{K, l}, \bZ_l)$ 
for the set of continuous homomorphisms from $\Gamma_{K, l}$ to $\bZ_l$, 
where $\bZ_l$ is equipped with the profinite topology.
Then $\Hom_{cts}(\Gamma_{K, l}, \bZ_l) = \Hom_{\bZ_l}(\Gamma_{K, l}, \bZ_l) \simeq \bZ_l^{r_l(K)}$.

\begin{lem}\label{2.1}
Let $K$ be a number field, $L$ a finite Galois extension of $K$, and $l$ a prime number.
Assume that $L$ has a complex prime. 
Then the canonical action of $G(L/K)$ on $\Gamma_{L, l}$ induced by conjugation is faithful.
\end{lem}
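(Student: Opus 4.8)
The plan is to reduce to the case where $G(L/K)=\langle\sigma\rangle$ is cyclic of prime order and then to contradict the triviality of the $\sigma$-action by a $\bZ_l$-rank count. Suppose the action were not faithful; then there is $\sigma\in G(L/K)$ with $\sigma\neq 1$ acting trivially on $\Gamma_{L,l}$, and after replacing $\sigma$ by a suitable power we may assume $\sigma$ has prime order $p$ (it still acts trivially). Set $M\defeq L^{\langle\sigma\rangle}$, so that $L/M$ is cyclic of degree $p$ while $L$ still has a complex prime. Since $L^{(\infty,l)}$ depends only on $L$ and $L/M$ is Galois, $L^{(\infty,l)}/M$ is Galois, and there is a short exact sequence $1\to\Gamma_{L,l}\to G(L^{(\infty,l)}/M)\to\langle\sigma\rangle\to 1$ in which the conjugation action of $\langle\sigma\rangle$ on $\Gamma_{L,l}$ is the one in the statement. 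As $\langle\sigma\rangle$ is cyclic and $\Gamma_{L,l}$ is abelian, this action is trivial if and only if $G(L^{(\infty,l)}/M)$ is abelian, i.e. if and only if $L^{(\infty,l)}/M$ is an abelian extension; so from now on I assume $L^{(\infty,l)}/M$ is abelian and seek a contradiction.

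Next I bound $r_l(L)=\rank_{\bZ_l}\Gamma_{L,l}$ from above. Since $L^{(\infty,l)}/L$ is unramified outside $P_{L,l}$, the abelian extension $L^{(\infty,l)}/M$ is unramified outside the finite set $S_0\defeq P_{M,l}\cup\Ram(L/M)\cup P_{M,\infty}$, where $\Ram(L/M)$ denotes the set of finite primes of $M$ ramified in $L$; hence $L^{(\infty,l)}$ is contained in the maximal abelian extension $N$ of $M$ unramified outside $S_0$. A standard idèle-class-group computation then gives $\rank_{\bZ_l}G(N/M)^{/\tor}=r_l(M)$: the primes of $S_0$ not above $l$ contribute only $\bZ_l$-torsion, because in an abelian extension the inertia at a finite prime not above $l$ is finite, the Frobenius direction at such a prime is torsion modulo the rest since the ideal class group of $M$ is finite, and $\bR^\times$ has no nontrivial $\bZ_l$-free quotient. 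Since $\Gamma_{L,l}=G(L^{(\infty,l)}/L)$ has finite index in $G(L^{(\infty,l)}/M)$, which is a quotient of $G(N/M)$, it follows that $r_l(L)\le r_l(M)$.

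Finally I combine this with the formula $r_l(F)=1+r_\bC(F)+d(F)$ for a number field $F$, where $d(F)\ge 0$ is the Leopoldt defect of $(F,l)$ (which refines the inequality $r_l(F)\ge 1+r_\bC(F)$ from \cite{NSW}, (10.3.20) recalled above), together with the monotonicity $d(M)\le d(L)$; the latter holds because $\cO_M^\times\hookrightarrow\cO_L^\times$ is compatible with the $l$-adic unit maps, so the Leopoldt kernel of $M$ embeds into that of $L$. These give $r_\bC(L)\le r_\bC(M)$. But $L\supsetneq M$, the degree $[L:M]=p$ is prime, and every complex prime of $M$ has exactly $p$ primes of $L$ above it, all complex, so $r_\bC(L)\ge p\cdot r_\bC(M)$; moreover $r_\bC(L)\ge 1$ since $L$ has a complex prime. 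If $M$ is totally real this contradicts $r_\bC(L)\ge 1>0=r_\bC(M)$, and if $r_\bC(M)\ge 1$ it contradicts $r_\bC(L)\ge p\cdot r_\bC(M)>r_\bC(M)$. In either case we reach a contradiction, so the $\sigma$-action is nontrivial and the lemma follows. The heart of the argument is the reformulation ``$\sigma$ acts trivially $\iff$ $L^{(\infty,l)}/M$ is abelian'' together with the ensuing bound $r_l(L)\le r_l(M)$; checking the Leopoldt-defect monotonicity and the count of complex primes is routine, and it is in the very last step that the hypothesis ``$L$ has a complex prime'' is used (and is clearly indispensable, as the totally real case $K=\bQ$, $L=\bQ(\sqrt 2)$ shows).
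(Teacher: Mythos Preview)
Your argument is correct. The reduction to $\langle\sigma\rangle$ of prime order, the equivalence ``$\sigma$ acts trivially on $\Gamma_{L,l}$ $\iff$ $L^{(\infty,l)}/M$ is abelian'' (valid because a central extension of a cyclic group by an abelian group is abelian), the class field theory bound $r_l(L)\le r_l(M)$, and the Leopoldt-defect monotonicity $d(M)\le d(L)$ all check out. One small cosmetic point: when you write $\rank_{\bZ_l}G(N/M)^{/\tor}$ you really mean the rank of the pro-$l$ part; the clean way to say the same thing is that every $\bZ_l$-extension of $M$ is automatically unramified outside $P_{M,l}$ (inertia at primes away from $l$ being finite, hence trivial in a torsion-free quotient), so $G(N/M)^{(l),/\tor}\simeq\Gamma_{M,l}$.

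The paper proceeds differently. It does not reduce to prime order: letting $K'$ be the fixed field of the kernel of the action, it invokes \cite[Lemma~3.1]{Shimizu} to get
\[
r_l(L)=\rank_{\bZ_l}\Hom_{cts}(\Gamma_{L,l},\bZ_l)=\rank_{\bZ_l}\Hom_{cts}(\Gamma_{L,l},\bZ_l)^{G(L/K')}=r_l(K'),
\]
and then \cite[Lemma~3.2]{Shimizu} to conclude $L=K'$. So the paper packages the two halves of your argument into two cited lemmas: the first replaces your ``abelian over $M$'' step by an invariants computation on the dual module (which works for arbitrary $G(L/K')$, not just cyclic of prime order), and the second is precisely the $r_l$-versus-$r_\bC$ comparison you carry out by hand via the Leopoldt defect. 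Your route is more self-contained and makes the role of the complex-prime hypothesis very explicit; the paper's route is shorter and avoids the reduction step, at the cost of relying on the external lemmas.
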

\begin{proof}
Write $K'$ for the subextension of $L/K$ corresponding to the kernel of the homomorphism: $G(L/K) \to \Aut(\Gamma_{L, l})$ induced by the canonical action.
By \cite[Lemma 3.1]{Shimizu}, we have $r_l(L) = \rank_{\bZ_l} \Hom_{cts}(\Gamma_{L, l}, \bZ_l) = \rank_{\bZ_l} \Hom_{cts}(\Gamma_{L, l}, \bZ_l)^{G(L/K')} = r_l(K')$.
It follows from \cite[Lemma 3.2]{Shimizu} that $L = K'$.
\end{proof}

\begin{prop}\label{0.2}
Assume 
$\# P_{S,f} \geq 1$ and that $K_S$ has a complex prime.
Let $\tau \in \Aut(K_S)$. 
Assume $\tau(K)=K$ and that the automorphism of $G_{K,S}$ induced by the conjugation action of $\tau$ 
is trivial.
Then $\tau$ is trivial.
\end{prop}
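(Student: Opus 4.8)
The plan is to descend to finite Galois subextensions of $K_S/K$ and apply Lemma \ref{2.1} to their maximal multiple $\bZ_l$-extensions, for a suitably chosen $l$. Since $\# P_{S,f} \geq 1$, I would first fix a prime number $l \in P_{S,f}$, so that $P_{K,l} \subset S$. Then $P_{L,l} \subset S(L)$ for every finite subextension $L/K$ of $K_S/K$, and since such an $L/K$ is unramified outside $S$, a routine ramification computation gives $L_{S(L)} = K_S$; as $L^{(\infty,l)}/L$ is unramified outside $P_{L,l}$ by class field theory, it follows that $L^{(\infty,l)} \subset K_S$. Because $K$ is a number field, $\tau|_K$ has finite order, and I would set $K_0 \defeq K^{\langle \tau|_K\rangle}$, so that $K/K_0$ is finite cyclic Galois and $\tau$ fixes $K_0$ pointwise. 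The reason for passing to $K_0$ is that Lemma \ref{2.1} requires a Galois extension whose base field is preserved by $\tau$, whereas $\tau|_K$ need not be trivial a priori.

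Next I would use the hypothesis that $\tau$ acts trivially on $G_{K,S}$ by conjugation, i.e.\ that $\tau$ centralizes $G_{K,S}=G(K_S/K)$ in $\Aut(K_S)$. Then $\tau$ normalizes every open normal subgroup of $G_{K,S}$, so $\tau(L) = L$ for every finite Galois subextension $L/K$ of $K_S/K$. Extending $\tau$ to an automorphism of $\overline{\bQ}$ yields an element of $G_{K_0}$ fixing such an $L$, and since this element together with $G_K$ generates $G_{K_0}$ (because $G(K/K_0)$ is cyclic), $L/K_0$ is Galois; the same reasoning applies to $\widetilde L \defeq L^{(\infty,l)}$, which is intrinsically attached to $L$. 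One thus gets exact sequences $1 \to \Gamma_{L,l} \to G(\widetilde L/K_0) \to G(L/K_0) \to 1$ with $\Gamma_{L,l} = G(\widetilde L/L)$ abelian. Moreover $\tau|_{\widetilde L}$ lies in $G(\widetilde L/K_0)$ (as $\tau$ fixes $K_0$ and preserves $\widetilde L$) and centralizes $G(\widetilde L/K) \supset \Gamma_{L,l}$ (since $G_{K,S}$ surjects onto $G(\widetilde L/K)$), so $\tau|_{\widetilde L}$ acts trivially on $\Gamma_{L,l}$ by conjugation. Because the conjugation action of $G(\widetilde L/K_0)$ on the abelian normal subgroup $\Gamma_{L,l}$ factors through $G(L/K_0)$, I conclude that $\tau|_L \in G(L/K_0)$ acts trivially on $\Gamma_{L,l}$.

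Finally I would take $L$ large enough to have a complex prime, which is possible since $K_S$ does. Lemma \ref{2.1}, applied to $L/K_0$, then gives that the canonical action of $G(L/K_0)$ on $\Gamma_{L,l}$ is faithful, so $\tau|_L = \id$; letting $L$ range over a cofinal family of such finite Galois subextensions of $K_S/K$ yields $\tau = \id$. I expect the main point requiring care to be the bookkeeping around the base field $K_0$ — specifically, checking that $L$ and $L^{(\infty,l)}$ remain Galois over $K_0$ so that Lemma \ref{2.1} is applicable — the rest being formal manipulation of conjugation actions together with the inclusion $L^{(\infty,l)} \subset K_S$ secured by the choice $l \in P_{S,f}$.
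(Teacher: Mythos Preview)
Your proof is correct and follows essentially the same strategy as the paper's: fix $l\in P_{S,f}$, reduce to finite Galois subextensions $L$ of $K_S$ containing $K$ with a complex prime, observe that $\tau|_L$ acts trivially on $\Gamma_{L,l}$, and apply Lemma~\ref{2.1} to conclude $\tau|_L=\id$. The only difference is cosmetic: the paper takes $K_0$ to be the $\Aut(K_S)$-invariant subfield of $K_S$ (so that $K_S/K_0$ is Galois automatically and one can simply pick $L$ finite Galois over $K_0$), which bypasses your bookkeeping around extending $\tau$ to $\overline{\bQ}$ and verifying that $L/K_0$ and $L^{(\infty,l)}/K_0$ are Galois.
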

\begin{proof}
Write $K_0$ for the $\Aut(K_S)$-invariant subfield of $K_S$. 
Then $K_S/K_0$ is Galois.
Take $l \in P_{S,f}$.
Let $L$ be any finite Galois subextension of $K_{S}/K_0$ 
containing $K$, and 
having a complex prime.
Then, by assumption, the action of $\tau|_{L}$ on $\Gamma_{L, l}$ 
is trivial.
Therefore, by Lemma \ref{2.1}, $\tau|_{L}$ is trivial.
Thus, $\tau$ is trivial.
\end{proof}

\begin{cor}\label{1.6}
Assume 
$\# P_{S,f} \geq 1$ and that $K_S$ has a complex prime.
Let $U$ be an open normal subgroup of $G_{K,S}$.
Then the conjugation action of $G_{K,S}$ on $U$ is faithful.
In particular, $G_{K,S}$ has a trivial center.
\end{cor}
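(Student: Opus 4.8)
The plan is to reduce the statement to Proposition \ref{0.2} by passing to an appropriate finite subextension of $K_S/K$. Let $U$ be an open normal subgroup of $G_{K,S}$ and suppose $g \in G_{K,S}$ centralizes $U$, i.e. $g$ lies in the kernel of the conjugation action of $G_{K,S}$ on $U$; the goal is to show $g=1$. Let $\widetilde U$ be the closed subgroup of $G_{K,S}$ topologically generated by $U$ together with $g$. The key point is that $g$ lies in the \emph{center} of $\widetilde U$: indeed $g$ commutes with every element of $U$ by hypothesis and with itself, hence with the abstract subgroup generated by $U \cup \{g\}$, hence with its closure $\widetilde U$.

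Next I would put $F \defeq K_S^{\widetilde U}$, the fixed field of $\widetilde U$ in $K_S$. Since $\widetilde U$ contains the open subgroup $U$ it is open, so $F$ is a number field with $K \subseteq F \subseteq K_S$, and $\widetilde U = G(K_S/F)$ by infinite Galois theory. I would then record the standard fact that $F_{S(F)} = K_S$; both inclusions are immediate from the transitivity of ``unramified'' in towers together with the equivalence $\q \in S(F) \Leftrightarrow \q|_K \in S$ for $\q \in P_F$ (for one inclusion one also uses that $F/K$, being a subextension of $K_S/K$, is unramified outside $S$). Hence $\widetilde U = G_{F,S(F)}$ and $g \in Z(G_{F,S(F)})$. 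Finally, the running hypotheses transfer to $(F,S(F))$: if $p$ is a rational prime with $P_{K,p} \subseteq S$ (which exists since $\# P_{S,f} \geq 1$) then $P_{F,p} \subseteq S(F)$, so $\# P_{S(F),f} \geq 1$; and $F_{S(F)} = K_S$ has a complex prime by hypothesis. Now view $g$ as an element $\tau \in \Aut(K_S) = \Aut(F_{S(F)})$; then $\tau(F) = F$ (since $g \in \widetilde U = G(K_S/F)$) and the automorphism of $G_{F,S(F)}$ induced by conjugation by $\tau$ is trivial (since $g$ is central in $\widetilde U$), so Proposition \ref{0.2}, applied to the pair $(F,S(F))$, gives $\tau = 1$, i.e. $g = 1$. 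This proves the faithfulness; taking $U = G_{K,S}$ (so $\widetilde U = G_{K,S}$, $F = K$) shows that the center of $G_{K,S}$, being the centralizer of $G_{K,S}$ in itself, is trivial.

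I do not anticipate a real obstacle: the substance is entirely contained in Proposition \ref{0.2} (and, through it, Lemma \ref{2.1}). The only points needing a little care are verifying $F_{S(F)} = K_S$ and that $(F,S(F))$ still satisfies ``$\# P_{S,f} \geq 1$'' and ``$K_S$ has a complex prime'', so that Proposition \ref{0.2} genuinely applies to it; everything else is routine manipulation with closed subgroups and the Galois correspondence.
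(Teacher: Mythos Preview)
Your proof is correct and follows essentially the same route as the paper: reduce to Proposition \ref{0.2} applied to a finite subextension $F$ of $K_S/K$ (the paper's one-line proof implicitly takes $F$ to be the fixed field of $U$, using normality of $U$ in $G_{K,S}$ to get $\tau(F)=F$, while you enlarge to $\widetilde U=\overline{\langle U,g\rangle}$ so that $g\in G(K_S/F)$ automatically). The passage to $\widetilde U$ is a harmless variant; the substance is identical.
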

\begin{proof}
Let $\tau \in G_{K,S}$ such that the automorphism of $U$ induced by the conjugation action of $\tau$ is trivial.
Then, by Proposition \ref{0.2}, $\tau$ is trivial.
\end{proof}

\section{Isomorphisms of fields}

In this section, we 
develop a way, 
based on \cite{Shimizu}, \S 3, 
to show that 
isomorphisms of Galois groups are induced by 
(unique) field isomorphisms. 
By using this, we prove the main result.

In the rest of this 
paper, 
fix an algebraic closure $\overline{\bQ}$ of $\bQ$, 
and 
suppose that 
all number fields and all algebraic extensions of them 
are subfields of $\overline{\bQ}$.
Write $\widetilde{K}$ for the Galois closure of $K/\bQ$.
For $i=1,2$, let $K_i$ be a number field and $S_i$ a set of primes of $K_i$ with $P_{K_i,\infty} \subset S_i$.

\begin{prop}\label{2.2}
Let $\sigma :G_{K_1,S_1}\isom G_{K_2,S_2}$ be an isomorphism, and 
for $i=1,2$, 
$U_i$ an open normal subgroup of $G_{K_i,S_i}$ with $\sigma(U_1) = U_2$.
For $i=1,2$, write $L_i$ for the finite Galois subextension of $K_{i,{S_i}}/K_i$ corresponding to $U_i$.
Let $T_i \subset S_{i,f}(L_i)$ for $i=1,2$.
Assume that the following conditions hold:
\begin{itemize}

\item[(a)]
The 
good local correspondence between $T_1$ and $T_2$
holds for $\sigma|_{U_1}$ (see \cite[Definition 2.3]{Shimizu}).

\item[(b)]
There exists a finite extension $L/L_1L_2$ such that 
$L/\bQ$ is Galois and 
$\delta(T_i(L)) \neq 0$ for one $i$.

\item[(c)]
$P_{S_1,f} \cap P_{S_2,f} \neq \emptyset$.

\item[(d)]
$L_i$ has a complex prime for one $i$.

\end{itemize}
Then 
there exists $\tau \in G(L/\bQ)$ such that 
for any 
open normal subgroups $U_1'$, $U_2'$ of $G_{K_1,S_1}$,  $G_{K_2,S_2}$ containing $U_1$, $U_2$, respectively, with $\sigma(U_1') = U_2'$, 
it follows that 
$K_1=\tau(K_2)$, 
$L_1'=\tau(L_2')$ and 
the isomorphism: $G(L_1'/K_1) \isom G(L_2'/K_2)$ induced by $\sigma$ 
coincides with the isomorphism induced by $\tau|_{L_2'}$, 
where $L_1'$, $L_2'$ are finite Galois subextensions of $L_1/K_1$, $L_2/K_2$, corresponding to $U_1'$, $U_2'$, respectively.
\end{prop}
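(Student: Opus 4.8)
The plan is to recover the field isomorphism $\tau$ from the Galois-theoretic data by combining the Neukirch–Uchida-type machinery of \cite{Shimizu}, \S 3 with the density hypothesis (b) and the faithfulness result Proposition \ref{0.2} of \S 1. First I would use condition (a), the good local correspondence between $T_1$ and $T_2$ for $\sigma|_{U_1}$, to get, for each prime in $T_1$, a matching prime in $T_2$ together with a compatible identification of the associated decomposition groups (and, after passing to a common finite Galois extension, of the local Galois groups). Since by (b) the set $T_i(L)$ has nonzero Dirichlet density inside the Galois extension $L/\bQ$ for one of the indices $i$, the primes of $\bQ$ below $T_i$ of absolute degree one form a set of nonzero density; this is exactly the kind of ``large'' set of split primes to which the Chebotarev/Bauer-type arguments of \cite{Shimizu}, \S 3 apply, allowing one to reconstruct $L_i$ (hence eventually $K_i$) as a field together with the action of $\sigma$ on finite levels. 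Condition (c), $P_{S_1,f} \cap P_{S_2,f} \neq \emptyset$, supplies a common rational prime $l$ lying under $S_i$ on both sides, which is needed both to invoke the $\bZ_l$-extension arguments and to match the local behaviour at $l$; condition (d) guarantees the existence of a complex prime so that Proposition \ref{0.2} is available for uniqueness.

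The key steps, in order, would be: (1) pass to the compositum $L_1L_2$ and then to a finite Galois $L/\bQ$ as in (b), so that all relevant extensions are inside $L$ and $\delta(T_i(L)) \neq 0$; (2) apply the good local correspondence (a) at the level of $U_1$, $U_2$ to obtain, for a density-positive set of primes, bijections of decomposition groups compatible with $\sigma$; (3) run the reconstruction argument of \cite{Shimizu}, \S 3 — which is where the density hypothesis is consumed — to produce a candidate $\tau \in G(L/\bQ)$ realising $\sigma$ on the finite quotient $G(L_1/K_1) \isom G(L_2/K_2)$, in particular with $K_1 = \tau(K_2)$ and $L_1 = \tau(L_2)$; (4) check that $\tau$ is independent of the auxiliary choices and that for any larger pair $U_1' \supset U_1$, $U_2' \supset U_2$ with $\sigma(U_1') = U_2'$ the induced isomorphism $G(L_1'/K_1) \isom G(L_2'/K_2)$ agrees with conjugation by $\tau|_{L_2'}$; (5) deduce the compatibility statement for all such $U_i'$ by a diagram chase, using that the $L_i'$ are intermediate between $L_i$ and $K_i$ and that $\tau$ already matches the data at the bottom level $U_i$.

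For the uniqueness-type rigidity needed in step (4)–(5), I would argue that any two elements of $G(L/\bQ)$ inducing the same isomorphism on all the finite quotients in question differ by an element acting trivially on $G_{K,S}$ through conjugation, and then invoke Proposition \ref{0.2} (whose hypotheses $\# P_{S,f} \geq 1$ and the existence of a complex prime are exactly (c) and (d)) to conclude that difference is trivial; this pins down $\tau$ uniquely and forces the asserted coincidence of induced isomorphisms at every level $L_i'$.

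The main obstacle I expect is step (3): making the reconstruction of the field from the Galois group go through with only the weak hypothesis ``$\delta(T_i(L)) \neq 0$ for one $i$'' (which, recall, permits $T_i(L)$ to merely \emph{not have} a Dirichlet density rather than to have positive density), and on only \emph{one} of the two sides — one must propagate the local correspondence and the density information across $\sigma$ to the other side, and control the completions at the relevant primes (including at the common prime $l$ from (c)) well enough to recover the field structure and not merely an abstract isomorphism of decomposition data. Transcribing the arguments of \cite{Shimizu}, \S 3 from the setting there to ``open normal subgroups $U_i$ of $G_{K_i,S_i}$'' with these relaxed density and one-sidedness conditions is the technical heart of the proof.
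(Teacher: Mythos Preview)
Your overall shape --- use the machinery of \cite{Shimizu}, \S 3 together with the density hypothesis to manufacture a $\tau\in G(L/\bQ)$, then argue it realises $\sigma$ on every finite quotient --- matches the paper. But there is a genuine gap at the point where you pass from ``$\tau$ exists'' to ``$\tau$ induces the isomorphism $\overline{\sigma}_{L_1}:G(L_1/K_1)\isom G(L_2/K_2)$ coming from $\sigma$'', and the tool you propose for this step is the wrong one.

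What \cite{Shimizu}, \S 3 actually delivers (via Proposition~3.3, Lemma~3.4, Proposition~3.5 there) is an element $\tau\in G(L/\bQ)$ with $L_1=\tau(L_2)$ such that the induced isomorphism $\widetilde{\sigma}_{L_1,l}:\Gamma_{L_1,l}\isom\Gamma_{L_2,l}$ of the maximal $\bZ_l$-quotients agrees with $\widetilde{(\tau|_{L_2})}^\ast_l$. This is a statement about the \emph{abelian} $\bZ_l$-modules $\Gamma_{L_i,l}$, not yet about the finite group isomorphism $\overline{\sigma}_{L_1}$. The bridge is Lemma~\ref{2.1}: since $L_2$ has a complex prime (condition~(d)), the conjugation action of $G(L_2/K_2)$ on $\Gamma_{L_2,l}$ is faithful. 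Because $\widetilde{\sigma}_{L_1,l}=\widetilde{(\tau|_{L_2})}^\ast_l$, for each $\tau_1\in G(L_1/K_1)$ the elements $\overline{\sigma}_{L_1}(\tau_1)$ and $(\tau|_{L_2})^\ast(\tau_1)$ of $G(L_2/K_2)$ act identically on $\Gamma_{L_2,l}$; faithfulness then forces $\overline{\sigma}_{L_1}=(\tau|_{L_2})^\ast$. Once this equality holds at the bottom level, compatibility with every intermediate pair $L_1',L_2'$ follows immediately by Galois correspondence, with no further rigidity input.

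Your plan instead invokes Proposition~\ref{0.2} for the rigidity in steps (4)--(5). That proposition concerns automorphisms of the \emph{infinite} field $K_S$ acting trivially by conjugation on all of $G_{K,S}$; it says nothing about elements of the finite group $G(L/\bQ)$, which do not act on $G_{K,S}$ at all. So the sentence ``any two elements of $G(L/\bQ)$ inducing the same isomorphism on all the finite quotients in question differ by an element acting trivially on $G_{K,S}$'' is a category error, and Proposition~\ref{0.2} cannot be applied. The missing idea is precisely the $\Gamma_{L_2,l}$-faithfulness of Lemma~\ref{2.1}; without it, you have no mechanism to upgrade the match on $\Gamma_{L_i,l}$ produced by \cite{Shimizu}, \S 3 to a match on $G(L_1/K_1)\isom G(L_2/K_2)$.
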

\begin{proof}
By symmetry, we may assume that $L_2$ has a complex prime.
Write $\overline{\sigma}_{L_1}:G(L_1/K_1) \isom G(L_2/K_2)$ for the isomorphism induced by $\sigma$.
Take 
$l \in P_{S_1,f} \cap P_{S_2,f}$.
Write $\widetilde{\sigma}_{L_1, l}:\Gamma_{L_1, l} \isom \Gamma_{L_2, l}$ for the isomorphism induced by $\sigma|_{U_1}$. 
By \cite[Proposition 3.3]{Shimizu}, 
there exists $\tau \in G(L/\bQ)$ such that 
$\widetilde{\sigma}_{L_1, l} \circ \pi_{L/L_1, l} = \widetilde{(\tau|_{L_2})}^\ast_{l} \circ \pi_{L/\tau(L_2), l}$, 
where $\widetilde{(\tau|_{L_2})}^\ast_{l}:\Gamma_{\tau(L_2), l} \isom \Gamma_{L_2, l}$ is the isomorphism induced by $\tau|_{L_2}$.
Then we have $\widetilde{(\tau|_{L_2})}_{l}^{\ast -1} \circ \widetilde{\sigma}_{L_1, l} \circ \pi_{L/L_1, l} = \pi_{L/\tau(L_2), l}$.
By \cite[Lemma 3.4]{Shimizu}, we obtain 
$L_1^{(\infty,l)}\tau(L_2) = L_1\tau(L_2)^{(\infty,l)}$, 
and hence 
$L_1^{(\infty,l)}L = L\tau(L_2)^{(\infty,l)}$.
By \cite[Proposition 3.5]{Shimizu}, we obtain 
$L_1=\tau(L_2)$.
Further, since $\Image(\pi_{L/L_1, l})$ is open in $\Gamma_{L_1, l}$, 
$\widetilde{\sigma}_{L_1, l}$ coincides with $\widetilde{(\tau|_{L_2})}^\ast_{l}$.

Write $(\tau|_{L_2})^\ast: \Aut(L_1) \isom \Aut(L_2)$ for the isomorphism induced by $\tau|_{L_2}$.
By the 
equality: $\widetilde{\sigma}_{L_1, l} = \widetilde{(\tau|_{L_2})}^\ast_{l}$, 
for any $\tau_1 \in G(L_1/K_1)$, 
the actions of $\overline{\sigma}_{L_1}(\tau_1)$ and $(\tau|_{L_2})^\ast(\tau_1)$ on $\Gamma_{L_2, l}$ coincide.
Therefore, by Lemma \ref{2.1}, we obtain $\overline{\sigma}_{L_1}(\tau_1) = (\tau|_{L_2})^\ast(\tau_1)$, 
so that $\overline{\sigma}_{L_1} = (\tau|_{L_2})^\ast$.
Hence 
$\tau|_{L_2}$ is compatible with the actions of 
$G(L_2/K_2)$ and $G(L_1/K_1)$ 
on $L_2$ and $L_1$.
Thus, 
for any $U_1'$, $U_2'$ 
as in the assertion, 
$\tau|_{L_2}$ induces an isomorphism between $L_2^{G(L_2/L_2')} = L_2'$ and $L_1^{G(L_1/L_1')} = L_1'$.
Further, the isomorphism: $G(L_1'/K_1) \isom G(L_2'/K_2)$ induced by $\sigma$ 
coincides with the isomorphism induced by $\tau|_{L_2'}$.
\end{proof}

\begin{theorem}\label{2.3}
Let $\sigma :G_{K_1,S_1}\isom G_{K_2,S_2}$ be an isomorphism, and 
for $i=1,2$, 
$V_i$ a closed normal subgroup of $G_{K_i,S_i}$ with $\sigma(V_1) = V_2$.
For $i=1,2$, write $M_i$ for the Galois subextension of $K_{i,{S_i}}/K_i$ corresponding to $V_i$.
Assume that the following conditions hold:
\begin{itemize}

\item[(a)]
$\# P_{S_i,f} \geq 2$ for $i=1,2$. 

\item[(b)]
For one $i$ and for any finite Galois subextension $L_i$ of $M_i/K_i$, 
$\delta(P_{S_i,f} \cap \cs(L_i/\bQ)) \neq 0$.

\item[(c)]
For the $i$ in condition (b), 
there exists 
a prime number $l \in P_{S_1,f} \cap P_{S_2,f}$ 
such that 
$S_{3-i}$ 
satisfies condition $(\star_l)$ (see \cite[Definition 1.16]{Shimizu}).

\item[(d)]
$M_i$ has a complex prime for one $i$.

\end{itemize}
Then 
there exists 
an isomorphism $\tau:M_2 \isom M_1$ 
such that 
$K_1=\tau(K_2)$
and 
the isomorphism: $G(M_1/K_1) \isom G(M_2/K_2)$ induced by $\sigma$ 
coincides with the isomorphism induced by $\tau$.
\end{theorem}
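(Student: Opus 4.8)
The plan is to reduce Theorem \ref{2.3} to Proposition \ref{2.2} by a limiting argument over the open normal subgroups approximating $V_1$ and $V_2$. By symmetry I may assume condition (b) (and condition (c)'s reference point) holds for $i=2$, and that $M_2$ has a complex prime. First I would fix a prime number $l \in P_{S_1,f}\cap P_{S_2,f}$ as in condition (c), so that $S_1$ satisfies $(\star_l)$. Then for each open normal subgroup $U_2 \trianglelefteq G_{K_2,S_2}$ with $V_2 \subset U_2$, I set $U_1 \defeq \sigma^{-1}(U_2)$, and let $L_i \subset M_i$ be the finite Galois subextension of $K_{i,S_i}/K_i$ corresponding to $U_i$. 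The goal is to produce, for each such $U_2$ large enough, a field isomorphism $\tau_{U_2}: L_2 \isom L_1$ with $K_1 = \tau_{U_2}(K_2)$ inducing $\overline{\sigma}_{L_1}$ on Galois groups, and then to check these are compatible as $U_2$ shrinks so that they glue to an isomorphism $\tau: M_2 \isom M_1$.

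The heart of the matter is verifying the four hypotheses (a)--(d) of Proposition \ref{2.2} for a cofinal system of $U_2$'s, with a suitable choice of $T_i$ and of the field $L$. For the sets $T_i$: using that $S_2$ satisfies condition (b), i.e. $\delta(P_{S_2,f}\cap \cs(L_2/\bQ)) \neq 0$ for every finite Galois $L_2 \subset M_2/K_2$, I would take $T_2 \subset S_{2,f}(L_2)$ to be (a set of primes of $L_2$ lying over) the primes in $P_{S_2,f}\cap \cs(L_2/\bQ)$ — these split completely in $L_2/\bQ$, which is exactly the feature needed to make condition (b) of Proposition \ref{2.2} work with $L$ essentially a translate of the Galois closure. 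The condition $(\star_l)$ on $S_1$ together with \cite[Definition 2.3]{Shimizu} (the good local correspondence) should be what lets me transport this good set $T_2$ back through $\sigma$ to a set $T_1 \subset S_{1,f}(L_1)$ so that hypothesis (a) of Proposition \ref{2.2} holds for $\sigma|_{U_1}$: this is where the machinery of \S 3 of \cite{Shimizu}, refined here, does its work, matching decomposition groups and local degrees on a density-positive set of primes. Hypothesis (c) of Proposition \ref{2.2}, $P_{S_1,f}\cap P_{S_2,f}\neq\emptyset$, is immediate from condition (c) of the theorem since $l$ lies in the intersection; hypothesis (d) follows from condition (d) of the theorem (if $M_i$ has a complex prime, so does the finite $L_i$ for $U_i$ small enough, and the property persists); and hypothesis (b) of Proposition \ref{2.2} follows from the density statement in condition (b) of the theorem once one knows that splitting completely in $L_i/\bQ$ forces positive density of primes of the larger field $L$ above $T_i$. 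Condition (a) of the theorem ($\#P_{S_i,f}\geq 2$) is presumably needed to keep the density hypothesis meaningful and to invoke $(\star_l)$-type results without degeneracy.

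Granting all this, Proposition \ref{2.2} yields, for each admissible $U_2$, an element $\tau \in G(L/\bQ)$ — hence after restriction an isomorphism $\tau|_{L_2}: L_2 \isom L_1$ with $K_1 = \tau(K_2)$ — and moreover the crucial \emph{uniqueness/compatibility} clause: for any pair $U_1' \supset U_1$, $U_2' \supset U_2$ with $\sigma(U_1')=U_2'$, the \emph{same} $\tau$ restricts to the isomorphism of the corresponding subextensions and is compatible with $\sigma$ on Galois groups. I would use this compatibility to run an inverse-limit argument: given a descending cofinal sequence $U_2^{(1)} \supset U_2^{(2)} \supset \cdots$ with $\bigcap_n U_2^{(n)} = V_2$ (so $\bigcup_n L_2^{(n)} = M_2$), the isomorphisms $\tau^{(n)}|_{L_2^{(n)}}$ agree on overlaps by the uniqueness clause applied to $U_2^{(n+1)} \subset U_2^{(n)}$, hence assemble to an isomorphism $\tau: M_2 = \bigcup_n L_2^{(n)} \isom \bigcup_n L_1^{(n)} = M_1$. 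Compatibility with $\sigma$ at each finite layer passes to the limit, giving that the induced isomorphism $G(M_1/K_1) \isom G(M_2/K_2)$ is exactly $\overline{\sigma}$; and $K_1 = \tau(K_2)$ holds since it holds at every layer. The main obstacle I anticipate is precisely the verification of hypothesis (a) of Proposition \ref{2.2} — establishing that the good local correspondence for $\sigma|_{U_1}$ can be obtained on a set $T_1$ corresponding under $\sigma$ to a density-positive, completely-split set $T_2$, uniformly enough in $U_2$ that the pieces cohere; this is the step that genuinely uses condition $(\star_l)$ and the local-global analysis imported from \cite{Shimizu}, whereas (b), (c), (d) should be comparatively formal consequences of the hypotheses of the theorem.
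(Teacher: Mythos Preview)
Your overall strategy---reduce to Proposition~\ref{2.2} at each finite level and pass to the limit---is the same as the paper's. However, your limiting argument contains a genuine gap. You claim that the isomorphisms $\tau^{(n)}|_{L_2^{(n)}}$ agree on overlaps ``by the uniqueness clause'' of Proposition~\ref{2.2}, but that proposition contains no uniqueness clause: it asserts only that \emph{some} $\tau \in G(L/\bQ)$ works simultaneously for all $U_1' \supset U_1$. When you pass from $U_2^{(n)}$ to the smaller $U_2^{(n+1)}$, the new $\tau^{(n+1)}$ (living in $G(L^{(n+1)}/\bQ)$ for a larger $L^{(n+1)}$) does restrict to an isomorphism at level $n$, but there is no reason this restriction equals your previously chosen $\tau^{(n)}$; two isomorphisms $L_2^{(n)} \isom L_1^{(n)}$ compatible with $\overline{\sigma}$ can differ by an element of the center of $G(L_1^{(n)}/K_1)$, which need not be trivial. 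The paper avoids this by a compactness argument: it lifts everything to $G_\bQ$, defines the closed set $\mathfrak{A}_{U_1} \subset G_\bQ$ of all $\tau$ that work at level $U_1$, shows each $\mathfrak{A}_{U_1}$ is non-empty via Proposition~\ref{2.2}, and then uses that the $\mathfrak{A}_{U_1}$ form a nested family of non-empty closed subsets of the compact group $G_\bQ$, so $\bigcap_{U_1} \mathfrak{A}_{U_1} \neq \emptyset$.

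A second, smaller point: your verification of hypothesis~(a) of Proposition~\ref{2.2} is too sketchy and misses a concrete step. With your indexing ($i=2$ in condition~(b)), condition~(c) gives $(\star_l)$ only for $S_1$. To invoke \cite[Theorem~2.6]{Shimizu} and obtain the local correspondence one needs $(\star_l)$ on \emph{both} sides; the paper gets it for the remaining side by observing that the density hypothesis~(b) forces $\delta_{\sup}(S_2) > 0$ via \cite[Lemma~4.1]{Shimizu}, and then applying \cite[Proposition~1.21]{Shimizu}. After that, the chain \cite[Theorem~2.6, Proposition~2.8, Remark~2.7]{Shimizu} yields the good local correspondence between $T_i = P_{S_i,f}(L_i)$ (not just the completely-split primes, as you suggest), together with the key equality $P_{S_1,f} \cap \cs(L_1/\bQ) = P_{S_2,f} \cap \cs(L_2/\bQ)$; this equality is what makes the density computation for $T_i(L)$ with $L = \widetilde{L_1L_2}$ go through.
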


\begin{proof}
Let $U_1$ be any open normal subgroup of $G_{K_1,S_1}$ containing $V_1$. 
Set $U_2 \defeq \sigma(U_1)$.
For $i=1,2$, write $L_i$ for the finite Galois subextension of $K_{i,{S_i}}/K_i$ corresponding to $U_i$.
Let $U_1'$, $U_2'$ be any open normal subgroups of $G_{K_1,S_1}$,  $G_{K_2,S_2}$ containing $U_1$, $U_2$, respectively, with $\sigma(U_1') = U_2'$.
For $i=1,2$, write $L_i'$ for the finite Galois subextension of $L_i/K_i$ corresponding to $U_i'$.
We set 
\begin{equation*}
\mathfrak{A}_{U_1}\defeq 
\left\{ \tau \in G_\bQ \left|
\begin{array}{l}
\text{
for any $U_1'$, $U_2'$
as above, 
$K_1=\tau(K_2)$, $L_1'=\tau(L_2')$ and the isomorphism:}\\ 
\text{$G(L_1'/K_1) \isom G(L_2'/K_2)$ induced by $\sigma$ 
coincides with the isomorphism}\\ 
\text{induced by $\tau|_{L_2'}$
}
\end{array}
\right.\right\}.
\end{equation*}
Note that $\mathfrak{A}_{U_1}$ is a closed subset of $G_\bQ$.
In order to prove the existence of $\tau$ in the assertion, 
it suffices to show that 
$\mathfrak{A}_{U_1} \neq \emptyset$ 
for every $U_1$ 
as above.
Indeed, having shown this, we obtain 
$\cap_{U_1} \mathfrak{A}_{U_1} \neq \emptyset$. 
Moreover, we 
may assume that $L_i$ has a complex prime for one $i$.

By symmetry, we may assume that the $i$ in condition (b) is $1$.
By \cite[Lemma 4.1]{Shimizu}, 
we have 
$\delta_{\sup}(S_1) \geq \delta_{\sup}((P_{S_1,f} \cap \cs(K_1/\bQ))(K_1)) 
= [K_1 : \bQ]\delta_{\sup}(P_{S_1,f} \cap \cs(K_1/\bQ)) 
> 0$.
Hence, by \cite[Proposition 1.21]{Shimizu}, 
$S_1$ satisfies condition $(\star_l)$ 
for the prime number $l$ in condition (c).
Therefore, by \cite[Theorem 2.6]{Shimizu}, 
the local correspondence between $S_{1,f}$ and $S_{2,f}$
holds for $\sigma$.
By \cite[Proposition 2.8]{Shimizu}, 
$P_{S_1,f} = P_{S_2,f}$ and 
the good local correspondence between $P_{S_1,f}(K_1)$ and $P_{S_2,f}(K_2)$ holds for $\sigma$. 
Further, by \cite[Remark 2.7]{Shimizu}, 
the good local correspondence between $P_{S_1,f}(L_1)$ and $P_{S_2,f}(L_2)$ holds for $\sigma|_{U_1}$. 
Again by \cite[Proposition 2.8]{Shimizu}, we have 
$P_{S_1,f} \cap \cs(L_1/\bQ) =P_{S_2,f} \cap \cs(L_2/\bQ)$. 
Now, we set 
$L \defeq \widetilde{L_1 L_2}$.
Then we have 
\begin{equation*}
\begin{split}
P_{S_1,f} \cap \cs(L/\bQ) 
&= P_{S_1,f} \cap \cs(L_1 L_2/\bQ) \\
&= P_{S_1,f} \cap \cs(L_1/\bQ) \cap \cs(L_2/\bQ)\\
&= P_{S_1,f} \cap \cs(L_1/\bQ).
\end{split}
\end{equation*}
Therefore, by \cite[Lemma 4.1]{Shimizu}, 
$$\delta_{\sup}(P_{S_1,f}(L)) 
= [L : \bQ]\delta_{\sup}(P_{S_1,f} \cap \cs(L/\bQ)) 
= [L : \bQ]\delta_{\sup}(P_{S_1,f} \cap \cs(L_1/\bQ)) 
> 0.$$
Thus, by Proposition \ref{2.2}, we obtain 
$\mathfrak{A}_{U_1} \neq \emptyset$. 
\end{proof}

\begin{rem}\label{2.4}
By \cite[Lemma 4.1]{Shimizu}, 
condition (b) in Theorem \ref{2.3} is equivalent to the condition: ``for one $i$ and for any finite Galois subextension $L_i$ of $M_i/K_i$, 
$\delta(P_{S_i,f}(\widetilde{L_i})) \neq 0$".
Further, again by \cite[Lemma 4.1]{Shimizu}, 
this condition holds if the condition: ``for one $i$ and for some finite extension $L_i$ of $K_i$ (not necessary contained in $K_{i,{S_i}}$), 
$\delta_{\sup}(P_{S_i,f}(L_i)) = 1$" is satisfied.
In particular, 
if $\delta_{\sup}(S_i) = 1$ for one $i$, 
then $\delta_{\sup}(S_i(\widetilde{K_i})) = 1$ by \cite[Lemma 4.1]{Shimizu}, 
and hence $\delta_{\sup}(P_{S_i,f}(\widetilde{K_i})) = 1$ by applying  \cite[Lemma 4.5]{Shimizu} to $K=\widetilde{K_i}$, 
so that condition (b) holds.
\end{rem}

The main result in this paper is the following.

\begin{theorem}\label{2.5}
Assume that the following conditions hold:
\begin{itemize}

\item[(a)]
$\# P_{S_i,f} \geq 2$ for $i=1,2$. 

\item[(b)]
For one $i$ and for any finite Galois subextension $L_i$ of $K_{i,{S_i}}/K_i$, 
$\delta(P_{S_i,f} \cap \cs(L_i/\bQ)) \neq 0$.

\item[(c)]
For the $i$ in condition (b), 
there exists 
a prime number $l \in P_{S_1,f} \cap P_{S_2,f}$ 
such that 
$S_{3-i}$ 
satisfies condition $(\star_l)$ (see \cite[Definition 1.16]{Shimizu}).


\end{itemize}
Let $\sigma :G_{K_1,S_1}\isom G_{K_2,S_2}$ be an isomorphism.
Then 
there exists 
a unique isomorphism $\tau:K_{2,{S_2}} \isom K_{1,{S_1}}$ 
such that 
$K_1=\tau(K_2)$
and 
$\sigma$ 
coincides with the isomorphism induced by $\tau$.
In other words, the canonical map: $\Iso(K_{2,{S_2}}/K_2, K_{1,{S_1}}/K_1) \to \Iso(G_{K_1,S_1}, G_{K_2,S_2})$ is bijective.
\end{theorem}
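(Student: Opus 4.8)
The plan is to obtain the theorem from results already in hand: existence of $\tau$ comes from Theorem~\ref{2.3} applied to the trivial closed normal subgroups $V_i=\{1\}$, and uniqueness from Proposition~\ref{0.2}. The concluding reformulation is then automatic, because the canonical map $\Iso(K_{2,S_2}/K_2, K_{1,S_1}/K_1) \to \Iso(G_{K_1,S_1}, G_{K_2,S_2})$ sends a field isomorphism $\tau$ to the group isomorphism it induces (which runs $G_{K_1,S_1}\to G_{K_2,S_2}$); so surjectivity of this map is precisely the existence statement and injectivity is precisely the uniqueness statement.

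The first small step is to check that $K_{i,S_i}$ has a complex prime for $i=1,2$, which is needed for condition (d) of Theorem~\ref{2.3} and for the hypotheses of Proposition~\ref{0.2}. If $K_i$ is not totally real there is nothing to do. If $K_i$ is totally real, then since $\# P_{S_i,f}\geq 2$ there is an odd prime number $p$ with $P_{K_i,p}\subset S_i$; the extension $K_i(\zeta_p)/K_i$ is ramified only above $p$ and at the archimedean primes, hence (using the standing assumption $P_{K_i,\infty}\subset S_i$ of this section) lies in $K_{i,S_i}$, and it is totally complex.

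For existence I would then apply Theorem~\ref{2.3} with $V_i=\{1\}$, so that $M_i=K_{i,S_i}$ and the isomorphism $G(M_1/K_1)\isom G(M_2/K_2)$ induced by $\sigma$ is $\sigma$ itself: conditions (a), (b), (c) of Theorem~\ref{2.3} are, verbatim, conditions (a), (b), (c) of the present theorem, and condition (d) is the observation just made. This yields $\tau:K_{2,S_2}\isom K_{1,S_1}$ with $K_1=\tau(K_2)$ such that $\sigma$ coincides with the isomorphism induced by $\tau$. For uniqueness, suppose $\tau_1,\tau_2:K_{2,S_2}\isom K_{1,S_1}$ both satisfy $\tau_j(K_2)=K_1$ and both induce $\sigma$, i.e.\ $\sigma(g)=\tau_j^{-1}\circ g\circ\tau_j$ for all $g\in G_{K_1,S_1}$. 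Set $\phi\defeq\tau_1\circ\tau_2^{-1}\in\Aut(K_{1,S_1})$; then $\phi(K_1)=\tau_1(K_2)=K_1$, and the equality $\tau_1^{-1}g\tau_1=\tau_2^{-1}g\tau_2$ rearranges to $\phi\circ g\circ\phi^{-1}=g$ for every $g\in G_{K_1,S_1}$, so conjugation by $\phi$ induces the trivial automorphism of $G_{K_1,S_1}$. Since $\# P_{S_1,f}\geq 1$ and $K_{1,S_1}$ has a complex prime, Proposition~\ref{0.2} forces $\phi=\id$, i.e.\ $\tau_1=\tau_2$.

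I do not anticipate a genuine obstacle, since the substantive content is already packaged into Theorem~\ref{2.3} (and, behind it, into the local-correspondence machinery imported from \cite{Shimizu} and the faithfulness results of \S1). The only points needing care are the verification that $K_{i,S_i}$ has a complex prime, keeping the variance of the induced maps straight (the isomorphism induced by $\tau:K_{2,S_2}\isom K_{1,S_1}$ with $\tau(K_2)=K_1$ goes $G_{K_1,S_1}\to G_{K_2,S_2}$, $g\mapsto\tau^{-1}\circ g\circ\tau$), and a careful confirmation that conditions (a)--(c) here are literally conditions (a)--(c) of Theorem~\ref{2.3} specialized to $M_i=K_{i,S_i}$.
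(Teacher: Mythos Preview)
Your proposal is correct and follows essentially the same approach as the paper: existence via Theorem~\ref{2.3} with $V_i=\{1\}$ and uniqueness via Proposition~\ref{0.2}, after checking that $K_{i,S_i}$ has a complex prime. The paper's verification of the latter is slightly slicker---for any $l\in P_{S_i,f}$ one has $\mu_{l^2}\subset K_{i,S_i}$, which already forces $K_{i,S_i}$ to be totally imaginary---avoiding your case distinction and the need to single out an odd prime.
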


\begin{proof}
For $i=1,2$ and $l \in P_{S_i,f}$, 
we have $\mu_{l^2} \subset K_{i,{S_i}}$, so that 
$K_{i,{S_i}}$ is totally imaginary.
Hence the existence (resp. the uniqueness) of $\tau$ in the assertion follows from Theorem \ref{2.3} (resp. Proposition \ref{0.2}).
\end{proof}

\section{Corollaries}
In this section, we see some applications of the main theorem.

\begin{lem}\label{3.1}
The 
canonical maps: 
\begin{equation*}
\begin{split}
&\Iso(\cO_{{K_2},S_2}, \cO_{{K_1},S_1}) \to \Iso((K_2,S_2), (K_1,S_1)),\\
&\Iso(\cO_{K_{2,{S_2}},S_2(K_{2,{S_2}})}/\cO_{{K_2},S_2}, \cO_{K_{1,{S_1}},S_1(K_{1,{S_1}})}/\cO_{{K_1},S_1}) \to \Iso((K_{2,{S_2}}/K_2, S_2), (K_{1,{S_1}}/K_1, S_1))
\end{split}
\end{equation*}
are bijective.
\end{lem}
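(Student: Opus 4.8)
The plan is to identify both maps with the operation ``extend a ring isomorphism to the fraction fields'', whose two‑sided inverse is ``restrict a field isomorphism to the rings of $S$‑integers''. Two basic facts make this work. First, $K$ is the fraction field of $\cO_{K,S}$ and, more generally, $F$ is the fraction field of $\cO_{F,S(F)}$ for $F=K_{i,S_i}$: indeed $P_{K_i,\infty}\subset S_i$ forces $\cO_{K_i}\subset\cO_{K_i,S_i}$, so $\cO_{K_i,S_i}$ has fraction field $K_i$, and $\cO_{K_{i,S_i},S_i(K_{i,S_i})}=\bigcup_{K_i'}\cO_{K_i',S_i(K_i')}$ is a filtered union over the finite subextensions $K_i'$ of $K_{i,S_i}/K_i$, with fraction field $K_{i,S_i}$. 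Second, by the valuative description $\cO_{F,S}=\{a\in F\mid |a|_\p\le 1\text{ for all }\p\notin S\}$, the nonzero prime ideals of $\cO_{K_i,S_i}$ correspond bijectively to $P_{K_i,f}\setminus S_i$, and likewise the nonzero prime ideals of $\cO_{K_{i,S_i},S_i(K_{i,S_i})}$ correspond to $P_{K_{i,S_i},f}\setminus S_i(K_{i,S_i})$.

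For the first map: given $\phi\in\Iso(\cO_{K_2,S_2},\cO_{K_1,S_1})$, extend it uniquely to $\tilde\phi\in\Iso(K_2,K_1)$. Since $\phi$ carries nonzero primes to nonzero primes, $\tilde\phi$ induces a bijection between $P_{K_2,f}\setminus S_2$ and $P_{K_1,f}\setminus S_1$; as any isomorphism of number fields also induces a bijection of archimedean places and $P_{K_i,\infty}\subset S_i$, we get that $\tilde\phi$ induces a bijection between $S_2$ and $S_1$, i.e.\ $\tilde\phi\in\Iso((K_2,S_2),(K_1,S_1))$. Conversely, for $\tau\in\Iso((K_2,S_2),(K_1,S_1))$, the valuative description of $\cO_{K_i,S_i}$ gives at once $\tau(\cO_{K_2,S_2})=\cO_{K_1,S_1}$. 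The two constructions are mutually inverse, so the first map is bijective.

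For the second map, I would repeat the argument one level up. Write $A_i\defeq\cO_{K_i,S_i}\subset B_i\defeq\cO_{K_{i,S_i},S_i(K_{i,S_i})}$. Given $\Phi$ in the source, so $\Phi\in\Iso(B_2,B_1)$ with $\Phi(A_2)=A_1$, extend to $\tilde\Phi\in\Iso(K_{2,S_2},K_{1,S_1})$; then $\tilde\Phi(K_2)=\tilde\Phi(\operatorname{Frac}A_2)=\operatorname{Frac}A_1=K_1$. Since $S_i(K_{i,S_i})$ is the complement in $P_{K_{i,S_i}}$ of the set of nonzero primes of $B_i$ (all archimedean places of $K_{i,S_i}$ lie over $P_{K_i,\infty}\subset S_i$, hence in $S_i(K_{i,S_i})$), and $\tilde\Phi$ both bijects all places and respects nonzero prime ideals, $\tilde\Phi$ induces a bijection between $S_2(K_{2,S_2})$ and $S_1(K_{1,S_1})$, so $\tilde\Phi$ lies in the target. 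Conversely, $\tau\in\Iso((K_{2,S_2}/K_2,S_2),(K_{1,S_1}/K_1,S_1))$ restricts to a ring isomorphism $B_2\isom B_1$ by the valuative description, and the induced $\tau|_{K_2}\colon K_2\isom K_1$ sends $A_2$ onto $A_1$ by the first part. Again the two constructions are mutually inverse.

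The only genuinely delicate point — the one I would write out in full — is the infinite‑extension part of the second basic fact: that $\cO_{K_{i,S_i},S_i(K_{i,S_i})}$ has fraction field $K_{i,S_i}$ and that its nonzero prime ideals are exactly the $\q\in P_{K_{i,S_i},f}\setminus S_i(K_{i,S_i})$. This follows from writing the ring as the filtered union of the Dedekind rings $\cO_{K_i',S_i(K_i')}$ at finite levels: each of these inclusions is integral and compatible with the relevant localizations, so by lying‑over and the description of the spectrum of a filtered colimit the primes of the union are exactly those pulled back from the finite levels, i.e.\ the claimed nonarchimedean primes. Everything else reduces to formal manipulations with fraction fields, the valuative description of $\cO_{F,S}$, and the fact that an isomorphism of (possibly infinite) algebraic extensions of $\bQ$ induces a bijection of all places, archimedean ones included.
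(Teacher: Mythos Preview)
Your proof is correct and follows exactly the approach the paper takes: the paper's entire proof is the single sentence ``The inverse maps are induced by restriction,'' and you have simply unpacked this in detail, verifying that extension to fraction fields and restriction to rings of $S$-integers are mutually inverse and respect the $S$-conditions. No different route is taken; your write-up is just a fully expanded version of the one-line argument.
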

\begin{proof}
The inverse maps are induced by restriction.
\end{proof}

\begin{lem}\label{3.2}
Assume 
$P_{\infty} \subset S$ and that 
$\# P_{S,f} \geq 1$.
Then 
all finite primes in $S_f$ and all real primes in $P_{\infty}$ are ramified in $K_S/K$.
\end{lem}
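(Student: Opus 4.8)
\section*{Proof proposal}

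My plan is to treat the real primes of $K$ in $P_\infty$ and the finite primes of $S_f$ separately; in each case I would exhibit a finite subextension of $K_S/K$ in which the given prime ramifies. The one input I would use from the hypothesis is that $\#P_{S,f}\ge 1$ produces a rational prime $p$ with $P_{K,p}\subset S$; together with $P_\infty\subset S$ this shows that every extension of $K$ unramified outside the primes above $p$ and the archimedean primes is contained in $K_S$.

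For a real prime $v$ of $K$, I would set $m=p$ if $p$ is odd and $m=4$ if $p=2$, and look at $K(\mu_m)/K$. This extension is unramified outside $P_{K,p}\cup P_\infty\subset S$, so $K(\mu_m)\subset K_S$; and since $m\ge 3$, for any prime $w$ of $K(\mu_m)$ above $v$ the completion $K(\mu_m)_w$ contains $\bR(\mu_m)=\bC$, so $w$ is complex. By the convention that ramification at such a prime is counted, $v$ is ramified in $K(\mu_m)/K$, hence in $K_S/K$.

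For a finite prime $\fp\in S_f$, I would pass to the Hilbert class field $H_K$ of $K$. Since $H_K/K$ is unramified at all finite primes (ramification at the archimedean primes being allowed in any case), $H_K\subset K_S$. By the principal ideal theorem, $\fp\cO_{H_K}=(\gamma)$ for some $\gamma\in H_K^\times$; as $\fp$ is unramified in $H_K/K$, $\gamma$ is a uniformizer at each prime of $H_K$ above $\fp$ and a unit at every other finite prime of $H_K$. I would then take $L\defeq H_K(\gamma^{1/p})$. At a prime $\fP$ of $H_K$ above $\fp$ the polynomial $x^{p}-\gamma$ is Eisenstein, so $\fP$ is (totally) ramified in $L/H_K$; at a finite prime of $H_K$ lying over neither $p$ nor $\fp$, $\gamma$ is a unit, and adjoining $\mu_p$ is unramified there, which reduces the local extension to a Kummer extension by a unit over a field of residue characteristic $\ne p$, hence unramified. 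Thus $L/H_K$ is unramified outside the primes above $p$, the primes above $\fp$, and the archimedean primes, all of which lie in $S(H_K)$; so $L\subset K_S$, and since $\fp$ ramifies in $L/H_K$ it ramifies in $L/K$, hence in $K_S/K$.

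The substantive point is the finite-prime case. The naive attempt --- adjoining a $p$-th root of a generator of $\fp^{h_K}$ --- breaks down exactly when $p\mid h_K$, since then the $\fp$-valuation of such a generator is divisible by $p$ and no ramification results; passing to $H_K$ is what forces the relevant ideal to be principal with $\fp$-valuation precisely $1$. I expect the only non-formal step to be the verification that $L/H_K$ is unramified away from $p$, $\fp$ and $\infty$, via the standard reduction to a unit Kummer extension after adjoining $\mu_p$; everything else is bookkeeping with the convention on archimedean primes and with the inclusions $P_{K,p},\{\fp\},P_\infty\subset S$.
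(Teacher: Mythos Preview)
Your proof is correct. For the real primes you argue essentially as the paper does: the paper adjoins the full $l$-cyclotomic tower $K(\mu_{l^\infty})$ for some $l\in P_{S,f}$, while you use the finite level $K(\mu_m)$ with $m\in\{p,4\}$, but the mechanism (a non-real root of unity inside $K_S$) is the same.

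For the finite primes your route genuinely differs. The paper splits $S_f$ into $P_l$ and $S_f\setminus P_l$: primes above $l$ are handled by $K(\mu_{l^\infty})$, and for the rest the paper simply cites the proof of \cite[Lemma~2.3]{Ivanov}. You instead give a uniform, self-contained construction for every $\fp\in S_f$: pass to the Hilbert class field $H_K\subset K_S$, use capitulation to write $\fp\cO_{H_K}=(\gamma)$ with $v_\fP(\gamma)=1$ at each $\fP\mid\fp$, and then observe that $x^p-\gamma$ is Eisenstein at $\fP$ while $H_K(\gamma^{1/p})/H_K$ is unramified away from $p$, $\fp$ and $\infty$ by the standard unit-Kummer argument in residue characteristic $\neq p$. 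This buys you an explicit ramified subextension of $K_S/K$ without an external reference, at the modest cost of invoking the principal ideal theorem; the paper's version is shorter on the page but relies on the reader consulting \cite{Ivanov}.
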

\begin{proof}
Take $l \in P_{S,f}$.
Then all finite primes in $P_l$ and all real primes in $P_{\infty}$ are ramified in $K(\mu_{l^\infty})/K$.
Further, by the proof of \cite[Lemma 2.3]{Ivanov}, all finite primes in $S_f \setminus P_l$ are ramified in $K_S/K$.
\end{proof}

\begin{lem}\label{3.3}
For one $i$, assume 
$\# P_{S_i,f} \geq 1$.
Then 
the canonical inclusion: $$\Iso((K_{2,{S_2}}/K_2, S_2), (K_{1,{S_1}}/K_1, S_1)) \hookrightarrow \Iso(K_{2,{S_2}}/K_2, K_{1,{S_1}}/K_1)$$ is bijective.
\end{lem}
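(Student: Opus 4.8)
The plan is to prove that every $\tau \in \Iso(K_{2,{S_2}}/K_2, K_{1,{S_1}}/K_1)$ already induces a bijection between $S_2(K_{2,{S_2}})$ and $S_1(K_{1,{S_1}})$, hence lies in $\Iso((K_{2,{S_2}}/K_2, S_2), (K_{1,{S_1}}/K_1, S_1))$; since the displayed map is precisely the inclusion of $\Iso((K_{2,{S_2}}/K_2, S_2), (K_{1,{S_1}}/K_1, S_1))$ into $\Iso(K_{2,{S_2}}/K_2, K_{1,{S_1}}/K_1)$, this yields the asserted bijectivity. Passing from $\tau$ to $\tau^{-1}$ merely interchanges the indices $1,2$ and the two sides of this inclusion, and the hypothesis ``$\#P_{S_i,f}\geq 1$ for one $i$'' is symmetric in $1,2$, so I would assume throughout that $\#P_{S_2,f}\geq 1$.

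First I would transport the extension $K_{2,{S_2}}/K_2$ through $\tau$. Writing $\psi \defeq \tau|_{K_2} \colon K_2 \isom K_1$ and letting $T \defeq \psi(S_2) \subset P_{K_1}$ be the image of $S_2$ under the induced bijection on primes of $K_1$ (which takes archimedean primes to archimedean primes and preserves the residue characteristic of finite primes), the fact that ``unramified outside a prescribed set of primes'' is a property preserved by a field isomorphism shows that $\tau$ (extended to an automorphism of $\overline{\bQ}$) carries $K_{2,{S_2}}$ onto $K_{1,T}$, the maximal extension of $K_1$ unramified outside $T$. Since $\tau(K_{2,{S_2}}) = K_{1,{S_1}}$, this gives $K_{1,T} = K_{1,{S_1}}$ as subextensions of $\overline{\bQ}/K_1$; as $P_{K_1,\infty} = \psi(P_{K_2,\infty}) \subset T$ and $P_{K_1,\infty} \subset S_1$, the remaining point is to show $T_f = S_{1,f}$.

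For this I would compare $T_f$ and $S_{1,f}$ with the set $R$ of finite primes of $K_1$ that ramify in the common extension $K_{1,{S_1}}/K_1 = K_{1,T}/K_1$: the definitions of $K_{1,{S_1}}$ and $K_{1,T}$ give $R \subset S_{1,f}$ and $R \subset T_f$, while the reverse inclusions are precisely what Lemma \ref{3.2} supplies once one knows $\#P_{T,f} \geq 1$ and $\#P_{S_1,f} \geq 1$. Here the assumption $\#P_{S_2,f} \geq 1$ is bootstrapped: choosing a rational prime $p$ with $P_{K_2,p} \subset S_2$ gives $P_{K_1,p} = \psi(P_{K_2,p}) \subset T$, so $\#P_{T,f} \geq 1$ and Lemma \ref{3.2} applied to $(K_1,T)$ yields $T_f \subset R$, hence $T_f = R$; then $P_{K_1,p} \subset R \subset S_{1,f}$, so $\#P_{S_1,f} \geq 1$ and Lemma \ref{3.2} applied to $(K_1,S_1)$ yields $S_{1,f} \subset R$, hence $S_{1,f} = R = T_f$. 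Together with the archimedean primes this gives $T = S_1$, i.e.\ $\psi(S_2) = S_1$; finally, compatibility of prime restriction with $\tau$ — for a prime $\mathfrak{P}$ of $K_{2,{S_2}}$ one has $\tau(\mathfrak{P})|_{K_1} = \psi(\mathfrak{P}|_{K_2}) \in \psi(S_2) = S_1$ — upgrades this to the desired bijection $S_2(K_{2,{S_2}}) \isom S_1(K_{1,{S_1}})$.

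The only substantive ingredient is Lemma \ref{3.2} (that the primes one is allowed to ramify at are genuinely ramified in $K_S/K$), applied twice; the one thing to be careful about is propagating the nonemptiness hypothesis from $S_2$ to $T$ and then to $S_1$ before invoking it. I do not expect any real obstacle beyond this bookkeeping.
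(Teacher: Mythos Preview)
Your proof is correct and follows essentially the same route as the paper: both reduce the claim to Lemma \ref{3.2}, which characterizes $S_{i,f}$ as precisely the set of finite primes ramified in $K_{i,S_i}/K_i$, a description manifestly preserved by any $\tau\in\Iso(K_{2,S_2}/K_2,K_{1,S_1}/K_1)$. The only difference is in how the nonemptiness hypothesis is propagated to the other index: the paper observes that $\mu_{l^\infty}\subset K_{1,S_1}$ forces $\mu_{l^\infty}\subset K_{2,S_2}$ and hence $l\in P_{S_2,f}$, whereas you transport $S_2$ through $\psi$ to the auxiliary set $T=\psi(S_2)$ and bootstrap via $P_{K_1,p}\subset T_f=R\subset S_{1,f}$.
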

\begin{proof}
By symmetry, we may assume that $\# P_{S_1,f} \geq 1$.
Take $l \in P_{S_1,f}$.
Then $\mu_{l^\infty} \subset K_{1,{S_1}}$.
If $\Iso(K_{2,{S_2}}/K_2, K_{1,{S_1}}/K_1)$ is not empty, then $\mu_{l^\infty} \subset K_{2,{S_2}}$, so that $l \in P_{S_2,f}$.
Thus, the assertion 
follows from Lemma \ref{3.2}.
\end{proof}

An element in $\Iso((K_2,S_2), (K_1,S_1))$ 
can be extended to an element in 
\\
$\Iso((K_{2,{S_2}}/K_2, S_2), (K_{1,{S_1}}/K_1, S_1)) \subset \Iso(K_{2,{S_2}}/K_2, K_{1,{S_1}}/K_1)$, and therefore induces a well-defined element in $\OutIso(G_{K_1,S_1}, G_{K_2,S_2})$.
As a corollary of Theorem \ref{2.5}, 
we obtain the following, which is a generalization of {\cite[(12.2.2) Corollary]{NSW}}.

\begin{cor}\label{3.4}
Notations and assumptions are the same as in Theorem \ref{2.5}.
Then 
the canonical map: 
$\Iso((K_2,S_2), (K_1,S_1)) \to \OutIso(G_{K_1,S_1}, G_{K_2,S_2})$
is bijective.
\end{cor}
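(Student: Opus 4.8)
The plan is to deduce Corollary \ref{3.4} from Theorem \ref{2.5} together with the bookkeeping lemmas \ref{3.1}--\ref{3.3}. First I would unwind the definition of the canonical map in question. Given $\rho \in \Iso((K_2,S_2),(K_1,S_1))$, Lemma \ref{3.1} lets us lift $\rho$ to a ring isomorphism of rings of $S$-integers, and hence (restricting to fraction fields of the relevant rings, or directly) to an element of $\Iso((K_{2,S_2}/K_2,S_2),(K_{1,S_1}/K_1,S_1))$; by Lemma \ref{3.3} this is the same as an element $\tau \in \Iso(K_{2,S_2}/K_2, K_{1,S_1}/K_1)$, and such a $\tau$ induces an isomorphism $G_{K_1,S_1} \isom G_{K_2,S_2}$ well-defined up to $\Inn(G_{K_2,S_2})$ (the ambiguity being exactly the choice of extension of $\rho$). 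This gives the map $\Iso((K_2,S_2),(K_1,S_1)) \to \OutIso(G_{K_1,S_1},G_{K_2,S_2})$ whose bijectivity we must prove.

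For surjectivity: let $\sigma : G_{K_1,S_1} \isom G_{K_2,S_2}$ be any isomorphism, representing an arbitrary class in $\OutIso$. By Theorem \ref{2.5}, there is a (unique) $\tau : K_{2,S_2} \isom K_{1,S_1}$ with $K_1 = \tau(K_2)$ inducing $\sigma$. Then $\tau|_{K_2} \in \Iso(K_2,K_1)$, and I must check it lies in $\Iso((K_2,S_2),(K_1,S_1))$, i.e. that $\tau$ carries $S_2$ bijectively onto $S_1$. This is where Lemma \ref{3.2} enters: since $\tau$ identifies $G_{K_1,S_1}$ with $G_{K_2,S_2}$, it identifies $K_{1,S_1}$ with $K_{2,S_2}$ as the maximal extensions unramified outside $S_1$, $S_2$ respectively; and by Lemma \ref{3.2} the set $S_i$ is recovered intrinsically from $K_{i,S_i}/K_i$ as the set of primes ramifying in it (finite primes in $S_{i,f}$ and real archimedean primes all ramify; conversely nothing outside $S_i$ ramifies, by definition). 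Hence $\tau|_{K_2}$ respects the prescribed sets of primes, and $\sigma$ is the image of $\tau|_{K_2}$.

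For injectivity: suppose $\rho_1, \rho_2 \in \Iso((K_2,S_2),(K_1,S_1))$ have the same image in $\OutIso$. Extend each (via Lemmas \ref{3.1}, \ref{3.3}) to $\tau_j : K_{2,S_2} \isom K_{1,S_1}$ inducing isomorphisms $\sigma_j$ that differ by an inner automorphism of $G_{K_2,S_2}$, say $\sigma_2 = \iota_g \circ \sigma_1$. Replacing $\tau_2$ by its composite with the automorphism of $K_{2,S_2}$ corresponding to $g \in G_{K_2,S_2}$ (which does not change $\tau_2|_{K_2} = \rho_2$, since $g$ fixes $K_2$), we may assume $\sigma_1 = \sigma_2$ as honest isomorphisms. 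But then $\tau_1$ and $\tau_2$ both induce $\sigma_1$ and both satisfy $K_1 = \tau_j(K_2)$, so the uniqueness clause of Theorem \ref{2.5} (equivalently, bijectivity of the map in \ref{2.5} or Proposition \ref{0.2}) forces $\tau_1 = \tau_2$, hence $\rho_1 = \tau_1|_{K_2} = \tau_2|_{K_2} = \rho_2$.

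The main obstacle is the surjectivity step, specifically verifying that the field isomorphism $\tau$ produced by Theorem \ref{2.5} automatically matches up $S_2$ with $S_1$ — this is not part of the statement of Theorem \ref{2.5} and has to be extracted from the intrinsic characterization of $S_i$ via ramification in $K_{i,S_i}/K_i$ (Lemma \ref{3.2}), together with the observation that $\tau$ carries $K_{2,S_2}$ onto $K_{1,S_1}$ compatibly with the base fields. Everything else — the identification of the domain of the map via Lemmas \ref{3.1} and \ref{3.3}, and the injectivity argument via the uniqueness in Theorem \ref{2.5} — is routine diagram-chasing once the setup is in place. I would also remark that condition (a), $\#P_{S_i,f} \geq 2$, guarantees $\#P_{S_i,f} \geq 1$ so that Lemmas \ref{3.2} and \ref{3.3} apply, and that the totally imaginary conclusion noted in the proof of Theorem \ref{2.5} is what makes Lemma \ref{3.2} bite (real primes ramify, so the archimedean part of $S_i$ is also pinned down, though in fact $P_{K_i,\infty} \subset S_i$ throughout).
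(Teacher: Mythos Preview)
Your argument is correct and rests on the same two ingredients as the paper's proof: the bijection of Theorem~\ref{2.5} and Lemma~\ref{3.3}. The paper packages this more compactly by observing that $\Iso(K_{2,S_2}/K_2,K_{1,S_1}/K_1)$ carries a $G_{K_2,S_2}$-action whose quotient is identified (via Lemma~\ref{3.3}) with $\Iso((K_2,S_2),(K_1,S_1))$, and then simply transporting the bijection of Theorem~\ref{2.5} through this quotient; your explicit surjectivity/injectivity argument unwinds exactly the same content. Two minor redundancies: your appeal to Lemma~\ref{3.1} is unnecessary (extending $\rho$ to $K_{2,S_2}$ needs only that $\rho$ carries $S_2$ to $S_1$, not any ring-of-integers statement), and in the surjectivity step you could cite Lemma~\ref{3.3} directly rather than re-running its proof via Lemma~\ref{3.2}---once Theorem~\ref{2.5} hands you $\tau\in\Iso(K_{2,S_2}/K_2,K_{1,S_1}/K_1)$, Lemma~\ref{3.3} already says $\tau$ respects $S_2$, $S_1$.
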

\begin{proof}
$G_{K_2,S_2}$ acts on $\Iso(K_{2,{S_2}}/K_2, K_{1,{S_1}}/K_1)$ by the rule $\sigma(\phi) \defeq \phi\circ\sigma^{-1}$, 
and by Lemma \ref{3.3}, 
we have a bijection: 
$\Iso(K_{2,{S_2}}/K_2, K_{1,{S_1}}/K_1)/G_{K_2,S_2} \simeq \Iso((K_2,S_2), (K_1,S_1))$.
By Theorem \ref{2.5}, we have a bijection: 
$\Iso(K_{2,{S_2}}/K_2, K_{1,{S_1}}/K_1) \to \Iso(G_{K_1,S_1}, G_{K_2,S_2})$, which is easily seen to be $G_{K_2,S_2}$-invariant if we let $G_{K_2,S_2}$ act by inner automorphisms on the right-hand side (cf. Notations).
Thus, factoring out by the $G_{K_2,S_2}$-actions,
we obtain the required bijection.
\end{proof}

\begin{cor}\label{3.5}

Assume $P_{\infty} \subset S$ 
and that 
for any finite Galois subextension $L$ of $K_{S}/K$, 
$\delta(P_{S,f} \cap \cs(L/\bQ)) \neq 0$.
Then 
there is a canonical isomorphism: 
$\Aut(K,S) \isom \Out(G_{K,S})$.
\end{cor}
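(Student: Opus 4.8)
The plan is to deduce Corollary~\ref{3.5} from Theorem~\ref{2.5} and Corollary~\ref{3.4} by specializing to the case $K_1 = K_2 = K$ and $S_1 = S_2 = S$. First I would check that the hypotheses of Theorem~\ref{2.5} are satisfied in this setting. Condition (a) requires $\# P_{S,f} \geq 2$: since $P_\infty \subset S$ and, by the density hypothesis applied to $L = K$, the set $P_{S,f} \cap \cs(K/\bQ)$ has nonzero Dirichlet density, in particular $P_{S,f}$ is infinite, so (a) holds. For condition (b) with $i = 1 = 2$, this is precisely the assumed density hypothesis ``$\delta(P_{S,f} \cap \cs(L/\bQ)) \neq 0$ for all finite Galois subextensions $L$ of $K_S/K$''. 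For condition (c), I would need a prime $l \in P_{S,f}$ such that $S$ satisfies condition $(\star_l)$; as in the proof of Theorem~\ref{2.3}, the density hypothesis at $L = K$ gives $\delta_{\sup}(S) \geq [K:\bQ]\,\delta_{\sup}(P_{S,f} \cap \cs(K/\bQ)) > 0$ via \cite[Lemma 4.1]{Shimizu}, and then \cite[Proposition 1.21]{Shimizu} yields $(\star_l)$ for any $l \in P_{S,f}$; pick one such $l$ (which also lies in $P_{S_1,f} \cap P_{S_2,f}$ trivially).

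With the hypotheses verified, Corollary~\ref{3.4} applied to $K_1 = K_2 = K$, $S_1 = S_2 = S$ gives a bijection
\[
\Iso((K,S),(K,S)) \to \OutIso(G_{K,S}, G_{K,S}),
\]
that is, $\Aut(K,S) \isom \Out(G_{K,S})$ by the definitions in the Notations section. I would then observe that this bijection is a group homomorphism: the canonical map $\Aut(K,S) \to \Out(G_{K,S})$ sends a field automorphism $\tau$ (extended to $K_S/K$ via Lemma~\ref{3.3}, using $\# P_{S,f} \geq 1$) to the outer class of the automorphism of $G_{K,S}$ it induces by conjugation, and composition of field automorphisms corresponds to composition of the induced Galois-group automorphisms; hence it is compatible with the group structures on both sides. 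A bijective group homomorphism is an isomorphism, giving the stated canonical isomorphism $\Aut(K,S) \isom \Out(G_{K,S})$.

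I do not expect a serious obstacle here; the statement is essentially a clean corollary, and the only points requiring care are (i) confirming that the density hypothesis at the single level $L = K$ suffices to force $\# P_{S,f} \geq 2$ and condition $(\star_l)$, so that all of (a), (b), (c) of Theorem~\ref{2.5} hold, and (ii) checking that the bijection of Corollary~\ref{3.4}, which a priori is only a bijection of sets, respects composition and therefore upgrades to a group isomorphism. The extension step from $\Aut(K,S)$ to automorphisms of $K_S/K$ is handled by Lemma~\ref{3.3}, and the well-definedness of the resulting outer class was already noted in the paragraph preceding Corollary~\ref{3.4}.
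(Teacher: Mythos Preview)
Your proposal is correct and follows essentially the same route as the paper. The paper's proof is terser---it simply invokes \cite[Lemma 4.1]{Shimizu} and \cite[Proposition 1.21]{Shimizu} to obtain condition $(\star_l)$ and then appeals to Corollary~\ref{3.4}---but the substance is identical: you have merely made explicit the verification of condition (a) (that $\#P_{S,f}\geq 2$ follows from the density hypothesis) and the observation that the resulting bijection respects composition, both of which the paper leaves to the reader.
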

\begin{proof}
By \cite[Lemma 4.1]{Shimizu} and \cite[Proposition 1.21]{Shimizu}, 
$S$ satisfies condition $(\star_l)$ for any $l \in P_{S,f}$.
Therefore, the assertion follows immediately from Corollary \ref{3.4}.
\end{proof}

The following is a generalization of {\cite[(12.2.3) Corollary]{NSW}}.

\begin{cor}\label{3.6}
Notations and assumptions are the same as in Corollary \ref{3.5}.
Further, assume $\Aut(K,S)$ is trivial.
Then the canonical homomorphism: 
$G_{K,S} \to \Aut(G_{K,S})$ induced by the conjugation action is bijective.
In particular, all automorphisms of $G_{K,S}$ are inner.
\end{cor}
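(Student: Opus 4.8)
The plan is to deduce Corollary \ref{3.6} from Corollary \ref{3.5} together with the triviality of the center of $G_{K,S}$ established in Corollary \ref{1.6}. Concretely, under the assumptions of Corollary \ref{3.5} we have the canonical isomorphism $\Aut(K,S) \isom \Out(G_{K,S})$, and the extra hypothesis that $\Aut(K,S)$ is trivial then forces $\Out(G_{K,S}) = \{1\}$, i.e.\ every automorphism of $G_{K,S}$ is inner. So the content left to verify is that the conjugation map $G_{K,S} \to \Aut(G_{K,S})$ is injective as well as surjective.

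First I would record that the hypotheses of Corollary \ref{3.5} include $P_\infty \subset S$ and (via the Dirichlet density condition applied to $L = K$, say) that $\# P_{S,f} \geq 1$; moreover, taking $l \in P_{S,f}$ gives $\mu_{l^2} \subset K_S$, so $K_S$ is totally imaginary and in particular has a complex prime. Hence the hypotheses of Corollary \ref{1.6} are satisfied, and the conjugation action of $G_{K,S}$ on itself is faithful; equivalently, $G_{K,S}$ has trivial center. This gives injectivity of $G_{K,S} \to \Aut(G_{K,S})$.

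Next, for surjectivity: by Corollary \ref{3.5} the canonical map $\Aut(K,S) \to \Out(G_{K,S})$ is an isomorphism, and since $\Aut(K,S)$ is assumed trivial, $\Out(G_{K,S}) = \Aut(G_{K,S})/\Inn(G_{K,S})$ is trivial, i.e.\ $\Aut(G_{K,S}) = \Inn(G_{K,S})$. Since $\Inn(G_{K,S})$ is precisely the image of the conjugation homomorphism $G_{K,S} \to \Aut(G_{K,S})$, this map is surjective. Combining with the injectivity from the previous step, the conjugation homomorphism is bijective, and in particular all automorphisms of $G_{K,S}$ are inner, which is the assertion.

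There is essentially no obstacle here beyond bookkeeping: the two nontrivial inputs (the isomorphism $\Aut(K,S)\isom\Out(G_{K,S})$ and the faithfulness of conjugation) are already available as Corollary \ref{3.5} and Corollary \ref{1.6} respectively. The only point requiring a word of care is checking that the standing hypotheses of Corollary \ref{3.6} indeed imply those of Corollary \ref{1.6} — in particular that $\# P_{S,f} \geq 1$ and that $K_S$ has a complex prime — which follows, as indicated, from the Dirichlet density hypothesis and from $\mu_{l^2} \subset K_S$ for $l \in P_{S,f}$.
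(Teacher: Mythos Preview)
Your proof is correct and follows essentially the same approach as the paper. The paper cites Proposition~\ref{0.2} directly for the injectivity of $G_{K,S}\to\Inn(G_{K,S})$, whereas you cite its immediate consequence Corollary~\ref{1.6}; the verification that $\#P_{S,f}\geq 1$ and that $K_S$ has a complex prime (via $\mu_{l^2}\subset K_S$) is exactly what the paper does in the proof of Theorem~\ref{2.5}, so your explicit check here is appropriate.
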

\begin{proof}
By Corollary \ref{3.5}, $\Out(G_{K,S})$ is trivial.
Therefore, $\Inn(G_{K,S})=\Aut(G_{K,S})$.
By Proposition \ref{0.2}, the canonical homomorphism: 
$G_{K,S} \to \Inn(G_{K,S})$ is bijective.
\end{proof}

\begin{rem}\label{3.7}
Assume $P_{\infty} \subset S$.
Write $\pi_1(\cO_{{K},S})$ for the etale fundamental group of $\cO_{{K},S}$.
Then there exists a canonical isomorphism: $\pi_1(\cO_{{K},S}) \simeq G_{K,S}$.

By the canonical isomorphisms in Lemma \ref{3.1} and Lemma \ref{3.3}, 
we can identify the canonical map: $\Iso(K_{2,{S_2}}/K_2, K_{1,{S_1}}/K_1) \to \Iso(G_{K_1,S_1}, G_{K_2,S_2})$ in the assertion of Theorem \ref{2.5} 
with the canonical map: $$\Iso(\cO_{K_{2,{S_2}},S_2(K_{2,{S_2}})}/\cO_{{K_2},S_2}, \cO_{K_{1,{S_1}},S_1(K_{1,{S_1}})}/\cO_{{K_1},S_1}) \to \Iso(\pi_1(\cO_{{K_1},S_1}), \pi_1(\cO_{{K_2},S_2})).$$
Similarly, we can replace the fields and the Galois groups in the corollaries in this section 
by the rings and the etale fundamental groups, respectively.
\end{rem}

\section{Appendix: Intersections of decomposition groups}
In this section, we study intersections of decomposition groups in $G_{K,S}$. 
The results not only are interesting in themselves, 
but also give a 
proof of an a little weaker version of Proposition \ref{0.2}.

\begin{lem}\label{1.1}
Assume that 
$P_{\infty} \subset S$ and that 
$\# P_{S,f} \geq 2$.
Let $\overline{\p}, \overline{\q} \in S_f(K_S)$.
Then 
$D_{\overline{\p}}(K_S/K) = D_{\overline{\q}}(K_S/K)$
if and only if $\overline{\p} = \overline{\q}$.
\end{lem}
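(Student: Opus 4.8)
The ``if'' direction is trivial. For the ``only if'' direction, the plan is to argue by contradiction: suppose $\overline{\p} \neq \overline{\q}$ but $D_{\overline{\p}} = D_{\overline{\q}} =: D$. First I would reduce the ambiguity in the choice of $\overline{\p}, \overline{\q}$ by replacing them, if necessary, with suitable $G_{K,S}$-conjugates; since the decomposition groups of conjugate primes are conjugate subgroups, and we may pick a convenient pair of primes $\p, \q$ of $K$ below them, the key point is to produce an extension $L$ of $K$ inside $K_S$ in which the chosen primes behave incompatibly. The natural candidate is to use the primes $p = \p|_\bQ$ and $q = \q|_\bQ$ of $\bQ$ together with the hypothesis $\# P_{S,f} \geq 2$, which (via Lemma \ref{3.2}) guarantees that both $\p$ and $\q$ genuinely ramify in $K_S/K$ — so there is nontrivial local structure to exploit.

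The heart of the argument is to find a finite subextension $L/K$ of $K_S/K$ such that $\overline{\p}|_L \neq \overline{\q}|_L$ (so $D_{\overline{\p}}$ and $D_{\overline{\q}}$ already have different images in $G(L/K)$), contradicting $D_{\overline{\p}} = D_{\overline{\q}}$. To do this I would take a prime $l \in P_{S,f}$; then $K_S \supset K(\mu_{l^\infty})$ and more importantly $K_S$ contains the maximal extension ramified only at the primes above $l$ together with at least one other allowed prime. I expect the cleanest route is: choose $l \in P_{S,f}$ distinct from the residue characteristics of $\p$ and of $\q$ if possible (using $\# P_{S,f}\geq 2$ to have room), and consider a cyclotomic or ray-class extension $L/K$ in which $\p$ and $\q$ have different Frobenius/ramification behavior — e.g. $L = K(\mu_{\frak{N}(\p)-1}, \ldots)$ or an $l$-ray class field with modulus supported at one of the two primes but not the other, so that exactly one of $\p, \q$ ramifies in $L/K$. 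Then $D_{\overline{\p}}$ maps onto an inertia-containing subgroup of $G(L/K)$ while $D_{\overline{\q}}$ does not (or vice versa), forcing $D_{\overline{\p}} \neq D_{\overline{\q}}$.

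The main obstacle will be the case where $\p$ and $\q$ lie above the \emph{same} rational prime $p$, and moreover $p$ is the only available residue characteristic in $P_{S,f}$ other than possibly one more — here one cannot separate them by a global cyclotomic extension and must instead separate them locally, using that $K_S/K$ realizes, at each of $\p$ and $\q$, a large (ideally the full local) Galois group or at least a $\bZ_l^2$-extension with independent local behavior at the two primes. Concretely I would invoke that $G_{K,S}^{\ab,(l)}$ surjects onto a product over $S_f$ of local factors (class field theory for the $S$-ray class field), so the local components at $\overline{\p}$ and at $\overline{\q}$ can be prescribed independently; picking a character trivial on the inertia at $\overline{\q}$ but not at $\overline{\p}$ yields a finite $L$ with $\overline{\p}|_L \neq \overline{\q}|_L$. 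Wrapping up, once such an $L$ is produced the contradiction with $D_{\overline{\p}} = D_{\overline{\q}}$ is immediate, completing the proof.
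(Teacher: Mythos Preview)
The paper's own proof is a one-line citation of \cite[Corollary~2.7(ii)]{Ivanov}, so there is no internal argument to compare against; the substance is in the cited result.

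Your proposal has a genuine gap: it never handles the case $\overline{\p}|_K = \overline{\q}|_K$. Every separating device you propose---cyclotomic towers, ray-class fields with modulus supported at one place, characters of $G_{K,S}^{\ab,(l)}$ with prescribed local behaviour---distinguishes the primes $\p = \overline{\p}|_K$ and $\q = \overline{\q}|_K$ of $K$, and tacitly assumes $\p \neq \q$. (Note in particular that the ``local components'' visible in $G_{K,S}^{\ab}$ via class field theory are indexed by primes of $K$, not by primes of $K_S$, so when $\p = \q$ there is only one such component and your ``prescribe independently'' step has no content.) If $\overline{\p}$ and $\overline{\q}$ lie over the \emph{same} prime of $K$, they are $G_{K,S}$-conjugate, say $\overline{\q} = g\overline{\p}$, and the hypothesis $D_{\overline{\p}} = D_{\overline{\q}}$ says precisely that $g$ normalizes $D_{\overline{\p}}$ without lying in it. What is actually needed is that $D_{\overline{\p}}$ be self-normalizing in $G_{K,S}$; this requires real local input about the size of the decomposition groups realised in $K_S/K$ (cf.\ the use of \cite{Chenevier-Clozel} later in this section), and is exactly what Ivanov's result supplies.

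A smaller slip: the parenthetical ``$\overline{\p}|_L \neq \overline{\q}|_L$ (so $D_{\overline{\p}}$ and $D_{\overline{\q}}$ already have different images in $G(L/K)$)'' is a non sequitur---distinct primes of $L$ can certainly share a decomposition group in $G(L/K)$ (e.g.\ both trivial). Your subsequent inertia-based argument aims at the correct target, but the parenthetical as written is false.
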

\begin{proof}
The assertion follows immediately from \cite[Corollary 2.7(ii)]{Ivanov}.
\end{proof}

\begin{lem}\label{1.2}
Let $p$ be a prime number, $\kappa$ a $p$-adic field and $N$ a non-trivial closed normal subgroup of $G_{\kappa}$. 
Then 
$V_{\kappa} \cap N$ 
is a topologically infinitely generated pro-$p$ subgroup.
\end{lem}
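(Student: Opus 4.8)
The plan is to reduce the statement to the structure of the ramification (wild inertia) subgroup $V_\kappa$, which is a pro-$p$ group, and to a known fact about $p$-adic fields: the wild inertia subgroup $V_\kappa$ is a free pro-$p$ group of countably infinite rank (this is a classical theorem, e.g.\ it follows from the structure theory of $G_\kappa$ as in Jannsen--Wingberg / the exposition in \cite{NSW}, Chapter VII). Granting this, any nontrivial closed normal subgroup $N$ of $G_\kappa$ will be shown to meet $V_\kappa$ in a subgroup that is itself open in $V_\kappa$, hence (being an open subgroup of a free pro-$p$ group of infinite rank, so again free pro-$p$ of infinite rank by the Nielsen--Schreier formula for profinite groups) topologically infinitely generated; it is pro-$p$ because it is a closed subgroup of the pro-$p$ group $V_\kappa$.

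The key step is therefore to prove that $V_\kappa \cap N$ is open in $V_\kappa$ whenever $N \neq 1$. First I would observe that $V_\kappa$ is the unique maximal pro-$p$ normal subgroup of the (solvable-by-something, but in any case well-understood) group $G_\kappa$, and that the quotient $G_\kappa/V_\kappa = G_\kappa^{\tr}$ has the well-known structure of a semidirect product $\widehat{\bZ}^{(p')} \rtimes \widehat{\bZ}$ (tame quotient), whose closed normal subgroups are easy to enumerate. If $N \cap V_\kappa$ were not open in $V_\kappa$, then the image of $N$ in $G_\kappa$ would have, roughly speaking, ``small'' intersection with $V_\kappa$; I would derive a contradiction with normality of $N$ by using that $V_\kappa$ has no proper open $G_\kappa$-invariant subgroups of the relevant type, or more robustly, by the following argument: pick $1 \neq n \in N$. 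Its image $\bar n$ in $G_\kappa^{\tr}$ generates, together with its $G_\kappa$-conjugates, a nontrivial closed normal subgroup of $G_\kappa^{\tr}$ unless $n \in V_\kappa$. In the first case, the normal closure of $\bar n$ is already open in $G_\kappa^{\tr}$ (closed normal subgroups of $\widehat{\bZ}^{(p')}\rtimes\widehat{\bZ}$ are open or central, and there is no center here of infinite index in a way that avoids $V_\kappa$-openness), forcing $N$ to be open in $G_\kappa$, whence $N \cap V_\kappa$ is open in $V_\kappa$. In the second case $n \in V_\kappa$, so $1 \neq n \in N \cap V_\kappa$, and then normality of $N$ in $G_\kappa$ together with the fact that $V_\kappa$ acts on itself with no nontrivial proper invariant closed subgroup of infinite index (again via the free pro-$p$ structure and the action of $G_\kappa^{\tr}$) shows that the $G_\kappa$-normal closure of $n$ inside $V_\kappa$ is open in $V_\kappa$; since this normal closure lies in $N$, we get $N \cap V_\kappa$ open in $V_\kappa$.

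I expect the main obstacle to be making the case analysis above genuinely rigorous: one must be careful about closed normal subgroups of $V_\kappa$ that are invariant under the outer action of $G_\kappa^{\tr}$ but are neither trivial nor open (a priori the free pro-$p$ group of infinite rank has many closed normal subgroups of infinite index). The cleanest route is probably to avoid classifying $G_\kappa$-invariant subgroups of $V_\kappa$ directly and instead argue: since $N$ is normal and nontrivial, $N$ is either contained in $V_\kappa$ or surjects onto a nontrivial subgroup of $G_\kappa^{\tr}$; in the latter case one shows directly $N \supset V_\kappa$ (or at least $N\cap V_\kappa$ open) using that $[G_\kappa,G_\kappa]$-type commutators of elements mapping nontrivially to the tame quotient already generate open subgroups of $V_\kappa$ — this is where the specific structure constants of the Demushkin-type / tame-wild extension of $G_\kappa$ enter. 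Once $V_\kappa \cap N$ is known to be open in the free pro-$p$ group $V_\kappa$ of countably infinite rank, the conclusion (pro-$p$ and topologically infinitely generated) is immediate from the Nielsen--Schreier index formula.
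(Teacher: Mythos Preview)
Your central claim --- that $V_\kappa \cap N$ is \emph{open} in $V_\kappa$ whenever $N \neq 1$ --- is false. Take $N = \Phi(V_\kappa) = \overline{V_\kappa^p[V_\kappa,V_\kappa]}$, the Frattini subgroup of $V_\kappa$. This is characteristic in $V_\kappa$, hence normal in $G_\kappa$, and nontrivial; but $V_\kappa/\Phi(V_\kappa)$ is an infinite-dimensional $\bF_p$-vector space (since $V_\kappa$ is free pro-$p$ of infinite rank), so $\Phi(V_\kappa)$ is not open in $V_\kappa$. Your case analysis also relies on the statement that closed normal subgroups of $G_\kappa^{\tr} \simeq \widehat{\bZ}^{(p')}\rtimes\widehat{\bZ}$ are open or central, which is again false: the tame inertia $\widehat{\bZ}^{(p')}$ itself is normal but neither open nor central, as are all of its $\ell$-components.

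The strategy can be repaired, but the correct intermediate claim is weaker: one should show that $V_\kappa \cap N \neq 1$. If $V_\kappa \cap N = 1$ then $[N,V_\kappa] \subset N \cap V_\kappa = 1$, so $N$ centralizes $V_\kappa$; one then has to prove $C_{G_\kappa}(V_\kappa)=1$, which follows from the explicit (Jannsen--Wingberg) description of the $G_\kappa^{\tr}$-action on $V_\kappa$. Once $V_\kappa \cap N \neq 1$, it is a nontrivial closed normal subgroup of the free pro-$p$ group $V_\kappa$; the pro-$p$ analogue of the Karrass--Solitar theorem (a nontrivial finitely generated closed normal subgroup of a free pro-$p$ group has finite index) then forces $V_\kappa\cap N$ to be infinitely generated, since finite index in $V_\kappa$ would again give infinite rank by Nielsen--Schreier. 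The paper itself does not spell any of this out and simply cites \cite[(1.4) Satz]{Pop}, where the result is established.
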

\begin{proof}
The assertion follows immediately from \cite[(1.4) Satz]{Pop}.
\end{proof}

\begin{lem}\label{1.3}
Let $p,l$ be distinct prime numbers, 
$\kappa/\bQ_p$ an (a possibly infinite) algebraic extension 
and $\lambda/\kappa$ a Galois extension. 
Then $G(\lambda/\kappa)$ does not have a topologically infinitely generated pro-$l$ subgroup.
\end{lem}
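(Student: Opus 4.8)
The plan is to reduce to the case where $\kappa$ is a finite extension of $\bQ_p$ and $\lambda/\kappa$ is finite, and then to invoke the well-known structure of the absolute Galois group of a $p$-adic field. First I would observe that it suffices to bound the number of topological generators of every closed pro-$l$ subgroup $P \le G(\lambda/\kappa)$ uniformly; indeed, since $G(\lambda/\kappa)$ is a closed subgroup of $G_\kappa$ (for a suitable extension of $\kappa$ inside $\overline{\bQ_p}$) and every closed subgroup of a profinite group is the inverse limit of its images in the finite quotients, a pro-$l$ subgroup is topologically finitely generated as soon as its images in the pro-$l$ quotients of open subgroups have bounded rank.

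Next I would pass to the pro-$l$ situation: a closed pro-$l$ subgroup $P$ of $G_\kappa$ is contained in (a conjugate of) a pro-$l$ Sylow subgroup of $G_\kappa$, so it is enough to show that a pro-$l$ Sylow subgroup $P_l$ of $G_\kappa$ is topologically finitely generated. When $\kappa$ is a finite extension of $\bQ_p$ with $\mu_l \not\subset \kappa$, one has $\cd_l(G_\kappa) \le 1$ by local class field theory together with the computation of the Brauer group (the $l$-part of $H^2(G_\kappa, \mu_l)$ vanishes because $l \neq p$ and $\mu_l \not\subset \kappa$), so $P_l$ is a free pro-$l$ group; and its rank, $\dim_{\bF_l} H^1(P_l, \bF_l) = \dim_{\bF_l} H^1(G_\kappa, \bF_l)$, is finite and bounded in terms of $[\kappa:\bQ_p]$. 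After adjoining $\mu_l$ the same holds with at most one extra generator accounting for the cyclotomic character, so in all cases a pro-$l$ Sylow subgroup of $G_\kappa$ for $\kappa/\bQ_p$ finite is finitely generated, with rank bounded by $[\kappa:\bQ_p]+2$ or so.

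Finally I would handle the possibly-infinite $\kappa$ by a direct-limit argument: write $\kappa = \bigcup_j \kappa_j$ with $\kappa_j/\bQ_p$ finite, so $G_\kappa = \varprojlim_j G_{\kappa_j}$ with surjective transition maps, and a closed pro-$l$ subgroup $P \le G_\kappa$ maps to closed pro-$l$ subgroups $P_j \le G_{\kappa_j}$. Each $P_j$ is topologically finitely generated by the previous step, but the bound on its rank depends on $[\kappa_j : \bQ_p]$ and hence is not uniform — this is the main obstacle, and the point where simply quoting $\cd_l \le 1$ is not enough. The resolution is that the ramification filtration forces stabilization: for $l \neq p$, the wild inertia is pro-$p$, so any pro-$l$ subgroup of $G_{\kappa_j}$ injects into the tame quotient $G_{\kappa_j}^{\tame}$, whose pro-$l$ part is the pro-$l$ completion of a group on two generators $\langle \sigma, \tau \mid \sigma\tau\sigma^{-1} = \tau^{q} \rangle$ (with $q = \#\kappa(\p_j)$); a pro-$l$ subgroup of such a group is generated by at most two elements, \emph{independently of $j$}. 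Thus every closed pro-$l$ subgroup of $G_\kappa$, and a fortiori of the quotient $G(\lambda/\kappa)$, is topologically generated by at most two elements, hence is not topologically infinitely generated. I expect the bulk of the write-up to be the verification that pro-$l$ subgroups sit inside the tame quotient and the explicit two-generator bound there; the rest is the standard inverse-limit bookkeeping.
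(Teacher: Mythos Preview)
Your final argument --- wild inertia is pro-$p$, so any closed pro-$l$ subgroup injects into the tame quotient, whose $l$-Sylow sits in an exact sequence $1 \to \bZ_l \to G_{\kappa,l} \to \bZ_l \to 1$ and therefore has every closed subgroup $2$-generated --- is correct and is exactly the argument the paper gives. But your path to it is needlessly roundabout, and one step along the way is actually wrong.

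The claim ``$G_\kappa = \varprojlim_j G_{\kappa_j}$ with surjective transition maps'' is false: for $\kappa_j \subset \kappa_{j'}$ one has $G_{\kappa_{j'}} \subset G_{\kappa_j}$ as closed subgroups of $G_{\bQ_p}$, and $G_\kappa = \bigcap_j G_{\kappa_j}$; there are no surjections in sight. The error is harmless only because you never use it. Once you know that every closed pro-$l$ subgroup of $G_{\bQ_p}$ is $2$-generated (via the tame quotient), you are done immediately: $P \subset G_\kappa \subset G_{\bQ_p}$. That is the paper's reduction --- one line, no tower of $\kappa_j$'s, no $\cd_l$ computation, no worry about bounds depending on $[\kappa_j:\bQ_p]$. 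The entire middle portion of your plan (free pro-$l$ groups, rank $\leq [\kappa:\bQ_p]+2$, ``this is the main obstacle'') is a false start you should simply delete.

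One small point on the passage from $G_\kappa$ to the quotient $G(\lambda/\kappa)$: this is not quite ``a fortiori''. A closed pro-$l$ subgroup $P \le G_\kappa/N$ need not be the image of a pro-$l$ subgroup of $G_\kappa$ in any obvious way; the clean argument is to take the preimage $\tilde P \le G_\kappa$, choose an $l$-Sylow $Q$ of $\tilde P$, observe that $Q$ surjects onto $P$ (since $P$ is its own $l$-Sylow), and conclude $P$ is $2$-generated because $Q$ is.
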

\begin{proof}
We may assume 
that $\kappa$ is a $p$-adic field and 
that $\lambda = \overline{\kappa}$.
Let $G_{\kappa,l}$ be any $l$-Sylow subgroup of $G_{\kappa}$. 
Then there exists an exact sequence: 
$1\to \bZ_l \to G_{\kappa,l} \to \bZ_l \to 1$, 
and hence all subgroups of $G_{\kappa,l}$ are topologically generated by at most two elements (cf. \cite[2.2, Local situation and Lemma 2.2]{Ivanov}).
\end{proof}

\begin{prop}\label{1.4}
Let $\overline{\p} \in {S_f}(K_S)$ and $\q \in P_K \setminus \{ \overline{\p}|_K \}$.
Assume that 
$\overline{\p}|_{\bQ} \neq {\q}|_{\bQ}$ if $\q \in S_f$, 
and that $D_{\overline{\p}}(K_S/K)$ is full.
Then 
$D_{\overline{\p}}(K_S/K) \cap (\cap_{\overline{\q} \in \{ \q \}(K_S)} D_{\overline{\q}}(K_S/K))$ is trivial.
\end{prop}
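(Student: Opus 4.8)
The plan is to localize the problem at the prime $p \defeq \overline{\p}|_{\bQ}$ and exploit the structure theory of local Galois groups from Lemmas \ref{1.2} and \ref{1.3}. Set $D \defeq D_{\overline{\p}}(K_S/K)$, which we identify with $G((K_S)_{\overline{\p}}/K_{\overline{\p}|_K})$, and write $p = \overline{\p}|_{\bQ}$; by hypothesis $D$ is full, i.e. $(K_S)_{\overline{\p}}$ is algebraically closed, so $D \simeq G_{K_{\overline{\p}|_K}}$ is (isomorphic to) the absolute Galois group of a $p$-adic field. The intersection $I \defeq D \cap \big(\cap_{\overline{\q} \in \{\q\}(K_S)} D_{\overline{\q}}(K_S/K)\big)$ is a closed normal subgroup of $D$: indeed, $\cap_{\overline{\q} \in \{\q\}(K_S)} D_{\overline{\q}}$ is the intersection of all conjugates of $D_{\overline{\q}_0}$ for a fixed choice $\overline{\q}_0$ of prime above $\q$, hence is normal in $G_{K,S}$, so its intersection with $D$ is normal in $D$. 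Assume for contradiction that $I \neq 1$.

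First I would apply Lemma \ref{1.2} (with $\kappa = K_{\overline{\p}|_K}$ and $N = I$, using that $I$ is a nontrivial closed normal subgroup of $D \simeq G_\kappa$): this forces $V_\kappa \cap I$ to be a topologically infinitely generated pro-$p$ group. In particular $I$ contains a topologically infinitely generated pro-$p$ subgroup. Next I would analyze the other factor: pick $\overline{\q} \in \{\q\}(K_S)$ lying under... rather, fix $\overline{\q}$ and consider $I \subset D_{\overline{\q}}(K_S/K) \simeq G((K_S)_{\overline{\q}}/K_{\overline{\q}|_K})$, which by the identification in the Notations is the Galois group of an algebraic extension $(K_S)_{\overline{\q}}$ of the $q$-adic (or $\overline{\q}|_{\bQ}$-adic) local field $K_{\overline{\q}|_K}$, where $q \defeq \overline{\q}|_{\bQ}$. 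The hypothesis ``$\overline{\p}|_{\bQ} \neq \q|_{\bQ}$ if $\q \in S_f$'' together with the fact that $\q \notin S_f$ would be treated uniformly: in either case I claim $q \neq p$. If $\q \notin S_f$ then $\overline{\q}$ is unramified-type data and $D_{\overline{\q}}(K_S/K)$ is procyclic (generated by Frobenius), so it certainly contains no topologically infinitely generated pro-$p$ subgroup; if $\q \in S_f$ then $q \neq p$ by hypothesis, and Lemma \ref{1.3} (with the roles $p \leftrightarrow q$, $l \leftrightarrow p$) says $G((K_S)_{\overline{\q}}/K_{\overline{\q}|_K})$, being a Galois group over a $q$-adic-type field with $q \neq p$, has no topologically infinitely generated pro-$p$ subgroup.

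Combining: $I$ both contains a topologically infinitely generated pro-$p$ subgroup (from the Lemma \ref{1.2} step applied to the $D$-side) and, being a subgroup of $D_{\overline{\q}}(K_S/K)$, contains no such subgroup (from the $\q$-side analysis). This contradiction shows $I = 1$, as desired. I would present the $\q \notin S_f$ case explicitly by noting $(K_S)_{\overline{\q}}$ is then an unramified extension of $K_{\overline{\q}|_K}$ so the decomposition group is topologically generated by one element, handling it either directly or as a degenerate instance of the Lemma \ref{1.3} argument.

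The main obstacle I anticipate is the bookkeeping around the two local pictures and making sure the identifications are consistent — specifically, verifying that $I$, viewed inside $D_{\overline{\q}}(K_S/K)$, is genuinely a closed subgroup of a group of the form $G(\lambda/\kappa)$ with $\kappa$ of the right residue characteristic, and that the ``fullness'' hypothesis is used precisely where Lemma \ref{1.2} needs $\kappa$ to be an honest $p$-adic field with $N \trianglelefteq G_\kappa$ (rather than a subgroup of an absolute Galois group of a larger field). A secondary subtlety is the case $\q \in S_f$ with $\overline{\p}|_K = \overline{\q}|_K$ impossible only because we assumed $\q \neq \overline{\p}|_K$; but even if the primes of $K$ differ while lying over the same rational prime, the hypothesis explicitly excludes $\overline{\p}|_{\bQ} = \q|_{\bQ}$ in the $S_f$ case, so no further work is needed there. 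Everything else is a direct citation of Lemmas \ref{1.2} and \ref{1.3}.
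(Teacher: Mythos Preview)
Your proposal is correct and follows essentially the same route as the paper: assume the intersection is nontrivial, use Lemma \ref{1.2} on the full side to produce a topologically infinitely generated pro-$p$ subgroup, and then derive a contradiction on the $\q$-side via procyclicity (when $\q\notin S_f$) or Lemma \ref{1.3} (when $\q\in S_f$). Two small expositional points: your blanket claim ``in either case $q\neq p$'' is neither justified nor needed when $\q\notin S_f$ (there can be primes of $K$ above $p$ that are not in $S$), and you should explicitly allow for $\q$ archimedean in the $\q\notin S_f$ case (the paper does), where $D_{\overline{\q}}$ is finite rather than generated by a Frobenius---the contradiction is of course immediate there as well.
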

\begin{proof}
Let $p=\overline{\p}|_{\bQ}$ and 
$N = D_{\overline{\p}}(K_S/K) \cap (\cap_{\overline{\q} \in \{ \q \}(K_S)} D_{\overline{\q}}(K_S/K))$.
Assume that $N$ is non-trivial.
By the fullness of $D_{\overline{\p}}(K_S/K)$ 
and Lemma \ref{1.2}, $N$ has a topologically infinitely generated pro-$p$ subgroup.
Assume $\q \notin S_f$. Then $\q$ is unramified in $K_S/K$ or an archimedean prime. Therefore, for any $\overline{\q} \in \{ \q \}(K_S)$, $D_{\overline{\q}}(K_S/K)$ is pro-cyclic, so that all subgroups of $D_{\overline{\q}}(K_S/K)$ are also pro-cyclic, a contradiction.
Assume $\q \in S_f$. Then $p \neq {\q}|_{\bQ}$ by assumption.
By Lemma \ref{1.3}, 
for any $\overline{\q} \in \{ \q \}(K_S)$, $D_{\overline{\q}}(K_S/K)$ 
does not have a topologically infinitely generated pro-$p$ subgroup, a contradiction.
\end{proof}

\begin{cor}\label{1.5}
Assume that 
$P_{\infty} \subset S$ and that 
$\# P_{S,f} \geq 2$.
Then $\cap_{\overline{\p} \in S_f(K_S)} D_{\overline{\p}}(K_S/K)$ is trivial.
\end{cor}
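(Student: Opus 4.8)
The plan is to deduce Corollary \ref{1.5} directly from Proposition \ref{1.4}, after first disposing of the fullness hypothesis. Since $\# P_{S,f} \geq 2$, pick two distinct finite primes $l_1, l_2 \in P_{S,f}$. For a prime $\overline{\p} \in S_f(K_S)$ lying over, say, $l_1$, note that $K_S$ contains $K(\mu_{l_2^\infty})$ and in fact the full maximal extension unramified outside $S$; in particular, over the place $\overline{\p}$ the extension $K_S/K$ is ``as ramified as possible'' at $p = \overline{\p}|_{\bQ}$, so that $D_{\overline{\p}}(K_S/K) = G_{K_\p}$ is full (this uses that $S$ contains all of $P_p$ — which holds since $l_1 = p \in P_{S,f}$ means $P_{K,p} \subset S$ by our convention that membership of a finite prime of $K$ in $S$ is what ``$l \in P_{S,f}$'' encodes — together with the fact that $K_S$ contains the maximal $p$-extension of $K_\p$; cf. the argument in the proof of Lemma \ref{3.2}). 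The main point is that every $\overline{\p} \in S_f(K_S)$ has full decomposition group.

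Next I would fix one such $\overline{\p} \in S_f(K_S)$ and apply Proposition \ref{1.4}. To do so I need a prime $\q \in P_K \setminus \{\overline{\p}|_K\}$ satisfying the hypothesis $\overline{\p}|_{\bQ} \neq \q|_{\bQ}$ whenever $\q \in S_f$. Choose $\q \in S_f$ to be any finite prime of $K$ lying over the other rational prime $l_2 \neq p = l_1$; such a $\q$ exists and automatically satisfies $\q|_{\bQ} = l_2 \neq l_1 = \overline{\p}|_{\bQ}$ and $\q \neq \overline{\p}|_K$. Proposition \ref{1.4} then gives
\[
D_{\overline{\p}}(K_S/K) \cap \Bigl( \bigcap_{\overline{\q} \in \{\q\}(K_S)} D_{\overline{\q}}(K_S/K) \Bigr) = 1.
\]

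Finally, I observe that $\bigcap_{\overline{\r} \in S_f(K_S)} D_{\overline{\r}}(K_S/K)$ is contained in the left-hand side of the displayed equation: indeed $\overline{\p} \in S_f(K_S)$, and every $\overline{\q} \in \{\q\}(K_S)$ with $\q \in S_f$ also lies in $S_f(K_S)$, so the intersection over all of $S_f(K_S)$ is a fortiori an intersection over a subcollection of these decomposition groups. Hence it is trivial, which is the assertion.

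The only real obstacle is the first step — checking that $D_{\overline{\p}}(K_S/K)$ is full for $\overline{\p}$ above a prime $l \in P_{S,f}$. This is essentially local class field theory: since $P_{K,l} \subset S$, the field $K_S$ contains the maximal pro-$l$ (indeed the maximal) extension of $K_\p$ ramified only at $l$, and one checks the decomposition group exhausts $G_{K_\p}$; this type of argument already appears implicitly in \cite{Ivanov} and in the proof of Lemma \ref{3.2}. Everything else is a formal manipulation of the inclusion of intersections, so no serious difficulty remains.
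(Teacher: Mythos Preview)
Your overall architecture is exactly the paper's: pick $\overline{\p}$ above one rational prime in $P_{S,f}$, pick $\q$ above the other, invoke Proposition~\ref{1.4}, and observe that the big intersection sits inside the displayed one. That part is fine.

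The genuine gap is the fullness step. You treat ``$D_{\overline{\p}}(K_S/K)$ is full'' as a routine consequence of local class field theory, arguing that since $P_{K,p}\subset S$ the field $K_S$ must locally realise all of $\overline{K_\p}$. This does not follow. Knowing that arbitrary ramification at $\p$ is \emph{permitted} in $K_S/K$ is very far from knowing that every finite extension of $K_\p$ actually occurs as a completion of some finite subextension of $K_S/K$: the latter is a global realisation problem, not a local one, and neither class field theory nor the argument of Lemma~\ref{3.2} (which only produces ramification, not fullness) addresses it. In fact, whether the decomposition groups in $G_{K,S}$ at primes of $S$ are full was a well-known open question; the paper resolves it by citing the deep theorem of Chenevier--Clozel \cite[Th\'eor\`eme~5.1]{Chenevier-Clozel} (via \cite[Corollary~2.6]{Ivanov}), whose proof uses automorphic methods. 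Your sketch gives no substitute for this input, so as written the proof is incomplete at precisely its one nontrivial point.

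A minor side remark: your sentence ``every $\overline{\p}\in S_f(K_S)$ has full decomposition group'' overreaches---Chenevier--Clozel gives fullness only for $\overline{\p}$ with residue characteristic in $P_{S,f}$, which is all you (and the paper) actually use.
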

\begin{proof}
Take $\overline{\p} \in P_{S,f}(K_S)$ and 
$\q \in S_f$ with $\overline{\p}|_{\bQ} \neq \q|_{\bQ}$.
By \cite[Th\'{e}or\`{e}me 5.1]{Chenevier-Clozel}, $D_{\overline{\p}}(K_S/K)$ 
is 
full (see \cite[Corollary 2.6]{Ivanov}).
By Proposition \ref{1.4}, we have $D_{\overline{\p}}(K_S/K) \cap (\cap_{\overline{\q} \in \{ \q \}(K_S)} D_{\overline{\q}}(K_S/K))=1$.
Thus, we obtain $\cap_{\overline{\p} \in S_f(K_S)} D_{\overline{\p}}(K_S/K)=1$.
\end{proof}

The following is weaker than Proposition \ref{0.2}.
However, in the proof, we do not use Lemma \ref{2.1}.

\begin{prop}\label{1.7}
Assume that 
$P_{\infty} \subset S$ and that 
$\# P_{S,f} \geq 2$.
Let $\tau \in \Aut(K_S)$. 
Assume $\tau(K)=K$ and that the automorphism of $G_{K,S}$ induced by the conjugation action of $\tau$ 
is trivial.
Then $\tau$ is trivial.
\end{prop}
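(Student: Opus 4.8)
The plan is to localize at a prime of $K_S$ at which the decomposition group is full, and there exploit that $G_{\bQ_p}$ is torsion-free and that an open subgroup of it has trivial centralizer. First note that the hypothesis says precisely that $\tau$ lies in the centralizer of $G_{K,S}$ in $\Aut(K_S)$, i.e.\ $\tau g\tau^{-1}=g$ for every $g\in G_{K,S}$ (this makes sense since $\tau(K)=K$ forces $\tau g\tau^{-1}\in G_{K,S}$), and recall that $\tau$ automatically fixes $\bQ$ pointwise.

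The first step is to show that $\tau$ fixes every $\overline{\p}\in P_{S,f}(K_S)$. Since $\tau$ fixes $\bQ$, it permutes the primes of $K_S$ over a given rational prime, so it preserves the set $P_{S,f}(K_S)\subseteq S_f(K_S)$. For $\overline{\p}\in P_{S,f}(K_S)$ the standard conjugation formula for decomposition groups gives $\tau\,D_{\overline{\p}}(K_S/K)\,\tau^{-1}=D_{\tau(\overline{\p})}(K_S/K)$, and the left-hand side equals $D_{\overline{\p}}(K_S/K)$ because conjugation by $\tau$ is trivial on $G_{K,S}$. Hence $D_{\tau(\overline{\p})}(K_S/K)=D_{\overline{\p}}(K_S/K)$ with both $\overline{\p},\tau(\overline{\p})\in S_f(K_S)$, so Lemma \ref{1.1} forces $\tau(\overline{\p})=\overline{\p}$.

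The second step is to pass to completions. Fix $\overline{\p}\in P_{S,f}(K_S)$ and put $p=\overline{\p}|_{\bQ}$, $\p=\overline{\p}|_{K}$. By \cite[Th\'{e}or\`{e}me 5.1]{Chenevier-Clozel} (see \cite[Corollary 2.6]{Ivanov}) the group $D_{\overline{\p}}(K_S/K)$ is full, so $(K_S)_{\overline{\p}}$, being an algebraic extension of $\bQ_p$ that is algebraically closed, equals $\overline{\bQ_p}$, and under the canonical identification $D_{\overline{\p}}(K_S/K)\simeq G((K_S)_{\overline{\p}}/K_{\p})$ it becomes the open subgroup $G_{K_{\p}}$ of $G_{\bQ_p}$. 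Because $\tau$ fixes $\overline{\p}$ it extends to a continuous automorphism $\widetilde{\tau}$ of $(K_S)_{\overline{\p}}=\overline{\bQ_p}$, and because $\tau$ fixes $\bQ$ this $\widetilde{\tau}$ fixes the $\overline{\p}$-adic closure $\bQ_p$ of $\bQ$, so $\widetilde{\tau}\in G_{\bQ_p}$; moreover, passing to completions is functorial on the stabilizer of $\overline{\p}$, so the relation $\tau g\tau^{-1}=g$ for $g\in D_{\overline{\p}}(K_S/K)$ yields that $\widetilde{\tau}$ commutes with every element of $G_{K_{\p}}$.

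The last step is to note that the centralizer $C$ of the open subgroup $G_{K_{\p}}$ in $G_{\bQ_p}$ is trivial: $C\cap G_{K_{\p}}$ is the (trivial) center of $G_{K_{\p}}$, so $C$ and $G_{K_{\p}}$ form an internal direct product inside $G_{\bQ_p}$, whence $C$ is finite of order dividing $[K_{\p}:\bQ_p]$; but $G_{\bQ_p}$ is torsion-free, since by Artin--Schreier a nontrivial finite subgroup would exhibit $\bQ_p$ as a subfield of a real closed field, which is impossible as $\bQ_p$ is not formally real. Hence $\widetilde{\tau}=1$, and since $K_S\subseteq(K_S)_{\overline{\p}}$ and $\widetilde{\tau}$ extends $\tau$, we get $\tau=\operatorname{id}$. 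The main obstacle I anticipate is exactly that $\tau$ need not lie in $G_{K,S}$ — a priori $\tau|_K$ could be a nontrivial automorphism of $K$ fixing every prime in $S_f$ — so one cannot simply quote centerfreeness of $G_{K,S}$; choosing $\overline{\p}$ so that $(K_S)_{\overline{\p}}$ is algebraically closed is what promotes $\tau$ to an honest element of $G_{\bQ_p}$ and makes the local rigidity usable, and some care is also needed in checking the compatibility of completion with the conjugation relation.
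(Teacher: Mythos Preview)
Your proof is correct, but it follows a genuinely different route from the paper's. Both arguments begin the same way --- use Lemma~\ref{1.1} to show that $\tau$ fixes the relevant primes of $K_S$, and invoke \cite{Chenevier-Clozel} to get fullness of $D_{\overline{\p}}(K_S/K)$ for $\overline{\p}\in P_{S,f}(K_S)$ --- and both finish by trapping $\tau$ in a finite subgroup of a torsion-free local absolute Galois group. The difference is in how that finite subgroup is produced. The paper introduces the $\Aut(K_S)$-fixed field $K_0$, shows $\tau$ lies in $N=\bigcap_{\overline{\p}\in S_f(K_S)} D_{\overline{\p}}(K_S/K_0)$, and then appeals to Corollary~\ref{1.5} (hence to Proposition~\ref{1.4} and Lemmas~\ref{1.2},~\ref{1.3}) to see that $N\cap G_{K,S}=1$, so $N$ is finite. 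You instead work at a \emph{single} full prime: you promote $\tau$ to an element $\widetilde{\tau}\in G_{\bQ_p}$ and observe that $\widetilde{\tau}$ centralizes the open subgroup $G_{K_\p}$, whence $|\langle\widetilde{\tau}\rangle|$ divides $[K_\p:\bQ_p]$ via the internal direct product $C\cdot G_{K_\p}\cong C\times G_{K_\p}$.

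What each approach buys: your argument is more self-contained --- it bypasses the whole Appendix machinery on intersections of decomposition groups and does not need Lemma~\ref{3.3} (since you only need $\tau(\overline{\p})\in S_f(K_S)$ for $\overline{\p}\in P_{S,f}(K_S)$, which follows just from $\tau|_\bQ=\id$). The price is that you use the standard fact that $Z(G_{K_\p})=1$, which the paper does not invoke. The paper's route, on the other hand, establishes Corollary~\ref{1.5} along the way, which is of independent interest. Two small points worth making explicit in a write-up: the extension of $\tau$ to $\widetilde{\tau}$ on $(K_S)_{\overline{\p}}=\bigcup_L L_{\overline{\p}|_L}$ is obtained by gluing the continuous extensions $L_{\overline{\p}|_L}\isom \tau(L)_{\overline{\p}|_{\tau(L)}}$ (using that $\tau$ preserves $v_{\overline{\p}}$), and $\widetilde{\tau}|_{\bQ_p}=\id$ because it is the continuous extension of $\tau|_\bQ=\id$; both are routine but deserve a sentence.
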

\begin{proof}
Write $K_0$ for the $\Aut(K_S)$-invariant subfield of $K_S$. 
Then $K_S/K_0$ is Galois.
Let $\overline{\p} \in S_f(K_S)$. Then 
we have 
$D_{\tau\overline{\p}}(K_S/K) 
= \tau^{-1} D_{\overline{\p}}(K_S/K) \tau
= D_{\overline{\p}}(K_S/K)$ in $\Aut(K_S)$.
By Lemma \ref{3.3}, $\tau\overline{\p} \in S_f(K_S)$.
Therefore, by Lemma \ref{1.1}, we obtain $\tau\overline{\p}=\overline{\p}$, 
and hence $\tau \in D_{\overline{\p}}(K_S/K_0)$.
Thus, $\tau \in N \defeq \cap_{\overline{\p} \in S_f(K_S)} D_{\overline{\p}}(K_S/K_0)$.

By Corollary \ref{1.5}, $N \cap G_{K,S} = \cap_{\overline{\p} \in S_f(K_S)} D_{\overline{\p}}(K_S/K)$ is trivial. Hence $N$ is finite.
As in the proof of Corollary \ref{1.5}, for $\overline{\p} \in P_{S,f}(K_S)$, $D_{\overline{\p}}(K_S/K)$ is full, and hence $D_{\overline{\p}}(K_S/K_0)$ is also full, 
so that $D_{\overline{\p}}(K_S/K_0)$ is torsion-free by \cite[(7.1.8) Theorem (i)]{NSW}.
Therefore, $N$ is trivial.
Thus, $\tau$ is trivial.
\end{proof}


\end{document}